\newcommand{\fR}{\mathfrak{R}}
\newcommand{\fM}{\mathfrak{M}}
\newcommand{\fC}{\mathfrak{C}}
\newcommand{\fB}{\mathfrak{B}}
\newcommand{\fA}{\mathfrak{A}}
\newcommand{\Q}{\mathbb{Q}}
\newcommand{\N}{\mathbb{N}}
\newcommand{\Z}{\mathbb{Z}}
\DeclareMathOperator{\Th}{Th}
\DeclareMathOperator{\Aut}{Aut}
\DeclareMathOperator{\Csp}{CSP}
\DeclareMathOperator{\CSP}{CSP}
\renewcommand{\succ}{\mathrm{succ}}
\DeclareMathOperator{\AND}{\wedge}
\newcommand{\ceq}{\coloneqq}
\newcommand{\cequiv}{\mathrel{\vcentcolon\equiv}}
\DeclarePairedDelimiterX\set[1]\lbrace\rbrace{#1} 
\def\moverlay{\mathpalette\mov@rlay}
\def\mov@rlay#1#2{\leavevmode\vtop{%
   \baselineskip\z@skip \lineskiplimit-\maxdimen
   \ialign{\hfil$\m@th#1##$\hfil\cr#2\crcr}}}
\newcommand{\charfusion}[3][\mathord]{
    #1{\ifx#1\mathop\vphantom{#2}\fi
        \mathpalette\mov@rlay{#2\cr#3}
      }
    \ifx#1\mathop\expandafter\displaylimits\fi
    }
\newcommand{\cupdot}{\charfusion[\mathbin]{\cup}{\cdot}}
\newcommand{\bigcupdot}{\charfusion[\mathop]{\bigcup}{\cdot}}
\theoremstyle{definition}
\newtheorem{theorem}{Theorem}[section]
\newtheorem{proposition}[theorem]{Proposition}
\newtheorem{definition}[theorem]{Definition}
\newtheorem{lemma}[theorem]{Lemma}
\newtheorem{corollary}[theorem]{Corollary}
\newtheorem{remark}[theorem]{Remark}
\newtheorem{observation}[theorem]{Observation}
\newtheorem{example}[theorem]{Example}
\title{Tractable Combinations of Theories via Sampling}
\thanks{Both authors have received funding from the European Research Council (ERC  Grant Agreement no. 681988, CSP-Infinity), and the DFG Graduiertenkolleg 1763 (QuantLA)}
\author{Manuel Bodirsky, Johannes Greiner}
\address{Institut f\"ur Algebra\\
    Technische Universit\"at Dresden}
\email{manuel.bodirsky@tu-dresden.de, johannes.greiner@tu-dresden.de}
\begin{document}
\begin{abstract}
For a first-order theory $T$, the Constraint Satisfaction Problem of $T$ is the computational problem of deciding whether a given conjunction of atomic formulas is satisfiable in some model of $T$. In this article we develop sufficient conditions for polynomial-time tractability of the constraint satisfaction problem for the union of two theories with disjoint relational signatures. To this end, we introduce the concept of sampling for theories and show that samplings can be applied to examples which are not covered by the seminal result of Nelson and Oppen.
\end{abstract}

\maketitle

\section{Introduction}\label{sec:introduction}

Reasoning tasks for intelligent agents often require to check whether certain configurations or situations are legitimate or possible. Such decision problems can often be modelled as \emph{Constraint Satisfaction Problems (CSPs)}. The CSP of a first-order theory $T$ with finite relational signature $\tau$ is the computational problem of deciding whether a set of atomic $\tau$-formulas is satisfiable in some model of $T$. In fact, any decision problem is polynomial-time Turing equivalent to a CSP~ \cite{BodirskyGrohe}.
Often, reasoners have to decide instances of $\CSP(T_1 \cup T_2)$ where $T_1$ and $T_2$ are first-order theories with disjoint relational signatures such that $\CSP(T_1)$ and $\CSP(T_2)$ are well understood. This problem has already be studied by Nelson and Oppen~\cite{NelsonOppen79} and many others have continued this line of research (see for example Baader and Schulz~\cite{BaaderSchulz01}). CSPs of unions of theories are at the heart of SMT-Solvers (SAT Modulo Theories) and occur frequently in software verification~\cite{MouraBjorner11_SMTIntroAndApplications}.

The results of Nelson and Oppen~\cite{NelsonOppen79, Oppen80_ComplexityConvexityAndCombinationsOfTheories} provide sufficient conditions for the polynomial-time tractability of $\CSP(T_1 \cup T_2)$, covering a great variety of theories. 
Schulz~\cite{Schulz00_WhyCombinedDecisionProblemsAreOftenIntractable} as well as Bodirsky and Greiner~\cite{BodirskyGreinerLMCS19} have shown that in many situations, the conditions of Nelson and Oppen are also necessary for polynomial-time tractability (unless P = NP).

In this article we will present new sufficient conditions for polynomial-time tractability of $\CSP(T_1 \cup T_2)$, and present examples which show that our conditions are incomparable to the conditions provided by Nelson and Oppen.
 To prove our results, we introduce the notion of \emph{sampling} for theories and show how to apply samplings in order to solve $\CSP(T_1 \cup T_2)$ in polynomial time. 
 In particular, we construct a sampling for $T_1 \cup T_2$ from given samplings for $T_1$ and $T_2$.

\section{Sampling for a Theory}\label{sec:samplingForTheories}

Let $\tau$ be a finite \emph{relational signature}, i.e., a finite set of relation symbols, each of which endowed with a natural number denoting its arity. A \emph{$\tau$-structure} $\fA$ is a tuple starting with a set $A$, followed by an $n$-ary relation $R^{\fA} \subseteq A^n$ for each symbol $R\in \tau$ of arity $n$. We will always denote structures with uppercase fraktur letters and their domain with the corresponding uppercase Latin letter. 
A \emph{theory} is a set of first-order sentences. A \emph{$\tau$-theory} is a theory
where all non-logical symbols are in $\tau$. By $\Th(\fA)$ we denote the \emph{theory of $\fA$}, i.e., the set of all first-order sentences which are satisfied in $\fA$. A $\tau$-structure $\fA$ is a \emph{model} of a $\tau$-theory $T$, if all sentences of $T$ are satisfied in $\fA$. An \emph{atomic $\tau$-formula} is either of the form $R(x_1, \dots, x_n)$ with $R\in \tau$ of arity $n$ and variables $x_1, \dots, x_n$, of the form $x_1=x_2$, or of the form $\bot$, indicating the logical \enquote*{false}. 

We now present the central definitions of this article.
\begin{definition}
Let $\tau$ be a finite relational signature and $T$ a $\tau$-theory.
\begin{itemize}
 \item A \emph{sampling} for $T$ is a sequence $(L_n)_{n\in \N}$ of finite sets of finite $\tau$-structures such that for all $n\in \N$ and all conjunctions of atomic $\tau$-formulas $\phi$ with at most $n$ variables, there exists a model for $T$ in which $\phi$ is satisfiable if and only if there exists $\fB\in L_n$ such that $\phi$ is satisfiable in $\fB$.
 An element of $L_n$ is called \emph{$n$-sample}, or simply \emph{sample}, for $T$. 

 \item Let $(L_n)_{n\in \N}$ be a sampling for a theory $T$. 
 We define $\lvert L_n \rvert \ceq \sum_{\fB\in L_n} \lvert B \rvert$.
 A sampling $(L_n)$ is \emph{polynomial} if there exists a polynomial $p \in \Z[x]$ such that $\lvert L_n \rvert \leq p(n)$ for all $n\in \N$.
 \item A sampling $(L_n)_{n\in \N}$ is \emph{computable} if there exists an algorithm that, given $n$, computes $L_n$. If this algorithm runs in time polynomial in $n$, then $(L_n)$ is \emph{computable in polynomial time}.
 \item A \emph{sampling} for a structure $\fA$ is a sampling for $\Th(\fA)$.
 \item  For $n\geq 1$, let $S_n(T)$ be the class of all finite $\tau$-structures $\fB_n$ such that a conjunction of atomic $\tau$-formulas with at most $n$ variables is satisfiable in $\fB_n$ if and only if it is satisfiable in some model of $T$.
\end{itemize}
\end{definition}

While we are not aware that samplings have been defined for theories before, there is a definition for structures by Bodirsky, Macpherson and Thapper~\cite{BodirskyMacphersonThapper13} and we will now compare the two notions.
We need the classical notion of a CSP which is defined for structures instead of theories. Let $\tau$ be a finite relational signature and $\fA$ a
$\tau$-structure. $\CSP(\fA)$ is the computational problem of deciding
whether a conjunction of atomic $\tau$-formulas is satisfiable in $\fA$.
Notice that $\CSP(\Th(\fA)) = \CSP(\fA)$ holds for all relational
structures $\fA$, because $\Th(\fA)$ explicitly states which instances
are satisfiable.
Therefore, the definition of \enquote{sampling for $\fA$} in~\cite{BodirskyMacphersonThapper13} can be attained from our definition by imposing the  additional conditions on $(L_n)$ that for all $n\in \N$ the set $L_n$ may only contain one structure $\fB_n$ and that $\fB_n$ needs to have a homomorphism to $\fA$.
 Hence, any structure which has \enquote{efficient sampling} in the definition of Bodirsky, Macpherson and Thapper has a sample which is computable in polynomial time in our definition, and is thus polynomial.
 
 We will now prove that samplings for \emph{structures} only require one element in each $L_n$, just like in the definition of Bodirsky, Macpherson and Thapper. However, we will see in Example~\ref{ex:Succ2Col} that dropping the requirement that $\fB_n$ is homomorphic to $\fA$ allows smaller samples in some cases and can therefore reduce the runtime of algorithms that run on the samples.

\begin{definition}
 Let $T$ be a theory with finite relational signature. $T$ has the \emph{Joint Homomorphism Property (JHP)} if for any two models $\fA$, $\fB$ of $T$ there exists a model $\fC$ of $T$ such that both $\fA$ and $\fB$ homomorphically map to $\fC$. 
\end{definition}

\begin{proposition}[Proposition~2.1 in~\cite{BodirskyGreinerLMCS19}]\label{thm:universalModelJHP}
Let $T$ be a theory with finite relational signature. Then there exists a model $\fA$ of $T$ such that $\CSP(\fA) = \CSP(T)$ if and only if 
$T$ has the \emph{Joint Homomorphism Property (JHP)}.
\end{proposition}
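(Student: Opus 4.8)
The plan is to prove both directions of the proposition by the compactness theorem of first-order logic; the only structural ingredient is the elementary observation that a homomorphism between $\tau$-structures preserves satisfiability of every conjunction of atomic $\tau$-formulas (equalities included — and a conjunction containing $\bot$ is satisfiable in no structure, hence plays no role). I assume throughout that $T$ is satisfiable, the statement being degenerate otherwise.

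\textbf{JHP $\Rightarrow$ a universal model.} Up to renaming of variables there are only countably many conjunctions of atomic $\tau$-formulas, so the collection $\Phi$ of those that are satisfiable in some model of $T$ is a set. For each $\phi\in\Phi$ choose a fresh tuple of constant symbols $\bar c^{\,\phi}$, one for each variable of $\phi$, with these tuples pairwise disjoint, and set $T^{*}\ceq T\cup\{\,\phi[\bar c^{\,\phi}]:\phi\in\Phi\,\}$, where $\phi[\bar c^{\,\phi}]$ denotes $\phi$ with its variables replaced by the corresponding new constants. I claim $T^{*}$ is consistent. By compactness it suffices to model an arbitrary finite subtheory, which mentions $T$ and finitely many $\phi_1,\dots,\phi_k\in\Phi$; each $\phi_i$ is satisfiable in some model $\fB_i\models T$, and iterating JHP produces a model $\fD\models T$ that receives homomorphisms from all of $\fB_1,\dots,\fB_k$, hence — by the preservation remark — satisfies every $\phi_i$. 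Interpreting the finitely many pairwise disjoint constant tuples $\bar c^{\,\phi_i}$ along witnessing assignments turns $\fD$ into a model of the finite subtheory. The $\tau$-reduct of a model of $T^{*}$ is then a model $\fA\models T$ in which every $\phi\in\Phi$ is satisfiable; since conversely anything satisfiable in $\fA$ is satisfiable in a model of $T$, we conclude $\CSP(\fA)=\CSP(T)$.

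\textbf{A universal model $\Rightarrow$ JHP.} Let $\fA\models T$ satisfy $\CSP(\fA)=\CSP(T)$, and let $\fB,\fC$ be arbitrary models of $T$. In the language obtained from $\tau$ by adjoining pairwise disjoint constants $d_a$ for $a\in A$, $c_b$ for $b\in B$, and $e_x$ for $x\in C$, consider $\Sigma\ceq\mathrm{ElDiag}(\fA)\cup\mathrm{Diag}^{+}(\fB)\cup\mathrm{Diag}^{+}(\fC)$, where $\mathrm{Diag}^{+}(\fB)=\{R(c_{b_1},\dots,c_{b_m}):(b_1,\dots,b_m)\in R^{\fB}\}$ and likewise for $\fC$. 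A finite subset of $\Sigma$ consists of an elementary sentence $\theta(\bar d)$ true in $\fA$ together with finitely many atoms from the two positive diagrams; replacing constants by variables turns the latter into conjunctions of atomic $\tau$-formulas satisfiable in $\fB$ resp.\ $\fC$, hence in models of $T$, hence — by $\CSP(\fA)=\CSP(T)$ — in $\fA$ itself, and since the two groups of atoms involve disjoint constants they may be realised in $\fA$ simultaneously, leaving the interpretation $d_a\mapsto a$ intact. So every finite subset of $\Sigma$ is satisfiable, and by compactness $\Sigma$ has a model $\fM$. Its $\tau$-reduct is an elementary extension $\fA'\succeq\fA$, hence $\fA'\models T$, and the maps $b\mapsto c_b^{\fM}$ and $x\mapsto e_x^{\fM}$ are homomorphisms $\fB\to\fA'$ and $\fC\to\fA'$; thus $\fA'$ witnesses JHP for the pair $(\fB,\fC)$, and $T$ has JHP.

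The step I would flag as the main obstacle is not the first direction — routine compactness together with iterated JHP — but a subtlety in the second: compactness applied to a positive diagram produces a homomorphism into \emph{some} structure realising all of its finite fragments, not into $\fA$ itself, so one must adjoin $\mathrm{ElDiag}(\fA)$ to force that structure to be an elementary extension of $\fA$, and then invoke the fact that an elementary extension of a model of $T$ is again a model of $T$. The rest is bookkeeping: keeping the several families of new constants pairwise disjoint so that substitutions never interfere, and using only \emph{positive atomic} information precisely because it is homomorphisms, not embeddings, that we want (which is also why equalities and the symbol $\bot$ must be tracked explicitly).
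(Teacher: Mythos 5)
This proposition is imported by the paper from \cite{BodirskyGreinerLMCS19} (Proposition~2.1) without proof, so there is no in-paper argument to compare against; your two compactness arguments are correct and are essentially the standard proof of this fact. Two small remarks: in the second direction the elementary diagram of $\fA$ is more than you need --- $T\cup\mathrm{Diag}^{+}(\fB)\cup\mathrm{Diag}^{+}(\fC)$ already works, since finite fragments of the two positive diagrams are (separately, hence, by disjointness of the constants, jointly) realisable in $\fA$ itself, and any model of that union models $T$ directly --- and your explicit restriction to satisfiable $T$ is indeed necessary, since an unsatisfiable theory has JHP vacuously but no model at all.
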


\begin{definition}
For a set $L$ of $\tau$-structures  we define the \emph{disjoint union} $\bigcupdot_{\fB \in L} \fB$ as the structure with domain $\bigcupdot_{\fB \in L} B$ and a $k$-ary relation $R\in \tau$ holds on a tuple $t$ if and only if there exists $\fB \in L$ such that $t \in B^k$ and $R(t)$ holds in $\fB$.
\end{definition}

\begin{proposition}\label{thm:samplesOfStructures}
Let $T$ be a theory with JHP and let $(L_n)_{n\in \N}$ be a sampling for $T$. Then $(L'_n)_{n\in \N}$ with  $L'_n \ceq \set{\bigcupdot_{\fB\in L_n} \fB}$ is also a sampling for $T$. Moreover, $\lvert L'_n \rvert = \lvert L_n \rvert$ for all $n\in \N$.
\end{proposition}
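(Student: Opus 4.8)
The plan is to unwind the definition of sampling directly. Fix $n \in \N$ and a conjunction $\phi$ of atomic $\tau$-formulas with at most $n$ variables, and write $\fC \ceq \bigcupdot_{\fB \in L_n} \fB$, so that $L'_n = \set{\fC}$. We must show that $\phi$ is satisfiable in some model of $T$ if and only if $\phi$ is satisfiable in $\fC$. The \enquote{moreover} part is immediate: by definition the domain of $\fC$ is $\bigcupdot_{\fB \in L_n} B$, whence $\lvert C \rvert = \sum_{\fB \in L_n} \lvert B \rvert = \lvert L_n \rvert$. Throughout we may assume that $\phi$ has at least one conjunct and that no conjunct is $\bot$; the remaining degenerate cases are checked directly.

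The implication from models of $T$ to $\fC$ is the easy one. If $\phi$ is satisfiable in some model of $T$, then, since $\phi$ has at most $n$ variables, the sampling property of $(L_n)$ provides some $\fB \in L_n$ in which $\phi$ is satisfiable. Straight from the definition of the disjoint union, the inclusion of $B$ into $C$ is a homomorphism $\fB \to \fC$: if $R^{\fB}$ holds on a tuple, then $\fB$ itself witnesses that $R^{\fC}$ holds on that tuple. Conjunctions of atomic $\tau$-formulas, equalities included, are preserved by homomorphisms, so $\phi$ is satisfiable in $\fC$.

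For the converse, suppose $\phi$ is satisfiable in $\fC$ via an assignment $a$ of its variables to $C$. First I would decompose $\phi$ into its connected components $\phi_1, \dots, \phi_m$: partition the variables of $\phi$ by the equivalence relation generated by co-occurrence in a common atom, and let $\phi_j$ be the conjunction of those atoms all of whose variables lie in the $j$-th block. Then $\phi$ is equivalent to $\phi_1 \wedge \dots \wedge \phi_m$, the variable sets of the $\phi_j$ are pairwise disjoint, and each $\phi_j$ still has at most $n$ variables. The key point is that $a$ maps the variables of each $\phi_j$ into a single member $\fB_j$ of $L_n$: an atom of $\phi_j$ that holds in $\fC$ under $a$ must, by the definition of the disjoint union, be witnessed inside some $\fB \in L_n$, and this $\fB$ is unique because distinct members of $L_n$ have disjoint domains; connectedness of $\phi_j$ then propagates one common $\fB_j$ through all of its atoms and variables. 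Consequently $a$ restricted to the variables of $\phi_j$ satisfies $\phi_j$ in $\fB_j$, so by the sampling property each $\phi_j$ is satisfiable in some model $\fM_j$ of $T$. Applying the JHP inductively to $\fM_1, \dots, \fM_m$ yields a single model $\fM$ of $T$ together with homomorphisms $\fM_j \to \fM$; since conjunctions of atomic formulas are preserved by homomorphisms, each $\phi_j$ is satisfiable in $\fM$, and because the $\phi_j$ share no variables these per-component solutions combine into one assignment witnessing satisfiability of $\phi$ in $\fM$.

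I expect the only real obstacle to be the bookkeeping in this last direction: making precise that a solution in $\fC$ splits, along the connected components of $\phi$, into solutions that each live inside a single sample (the \enquote{unique witnessing sample} step, which is exactly where disjointness of the domains of the $\fB \in L_n$ is used), then amalgamating the per-component models through the JHP, and finally gluing the resulting per-component solutions back together — which is legitimate precisely because distinct components of $\phi$ have disjoint variable sets. The forward direction and the cardinality statement are routine.
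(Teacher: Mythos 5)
Your proof is correct, and the easy direction, the cardinality claim, and the decomposition of a solution in $\bigcupdot_{\fB\in L_n}\fB$ into pieces each living inside a single sample are essentially the same as in the paper (the paper partitions the variables by which sample the satisfying assignment sends them to, you by connected components of the constraint graph; in both cases the crucial observation is that no atom can straddle two samples because the disjoint union puts no tuples across components). Where you genuinely diverge is the amalgamation step. The paper invokes Proposition~\ref{thm:universalModelJHP} to obtain one model $\fA$ of $T$ with $\CSP(\fA)=\CSP(T)$, notes that $(L_n)$ is then a sampling for $\fA$, and satisfies every component $\phi_i$ inside this single universal model, so no further gluing of models is needed. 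You instead take, for each component $\phi_j$, an arbitrary witnessing model $\fM_j$ and apply the JHP inductively to the finitely many $\fM_j$, composing homomorphisms to land in one model $\fM$; since homomorphisms preserve conjunctions of atomic formulas and the components share no variables, the images combine into a solution in $\fM$. Your route is more self-contained: it uses only finitely many direct applications of the JHP and does not need the full strength of Proposition~\ref{thm:universalModelJHP}, whose proof requires amalgamating witnesses for \emph{all} instances at once. The paper's route is shorter given that the proposition is already available, and it is the formulation that the rest of the paper reuses (e.g.\ in Observation~\ref{thm:HomToStruct_Substruct}). Either way the argument is sound; just make sure, as you indicate, to dispose of the degenerate cases ($\bot$ as a conjunct, empty $L_n$) explicitly, since an empty disjoint union satisfies nothing while the sampling condition must still hold.
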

\begin{proof}
Let $\phi$ be a conjunction of atomic $\tau$-formulas with variables in $V \ceq \set{x_1, \dots, x_n}$.
If $\phi$ is satisfiable in $T$, then it is satisfiable in some $\fB \in L_n$ and therefore in $\fC \ceq \bigcupdot_{\fB\in L_n} \fB$. 
Assume that $\phi$ is satisfiable in $\fC$. Let $s\colon V \rightarrow C$ be a satisfying assignment. Then $s$ induces a partition $V_1, \dots, V_k$ on $V$, where two elements of $V$ are in the same part if they are mapped into the same element of $L_n$. However, $\phi$ cannot contain a conjunct $R(\dots, x_i, \dots, x_j, \dots)$ with $x_i \in V_i$ and $x_j \in V_j$ for $i\neq j$, as this relation would not hold in $\fC$. Hence, $\phi$ is equivalent to a conjunction $\phi_1 \AND \dots \AND \phi_k$, where each $\phi_i$ is itself a conjunction of atomic $\tau$-formulas, $s$ maps all variables of $\phi_i$ into $V_i$ and $\phi_i$ and $\phi_j$ have no common variables for $i\neq j$. 
Let $\fA$ be a model of $T$ such that $\CSP(\fA) = \CSP(T)$, which exists by Proposition~\ref{thm:universalModelJHP}.
As $(L_n)_{n\in \N}$ is a sampling for $T$, it is also a sampling for $\fA$ and therefore, each $\phi_i$ is satisfiable in $\fA$. Hence, $\phi$ is satisfiable in $\fA$ and therefore in $T$.
We have $\lvert L'_n \rvert  = \sum_{\fB \in L_n} \lvert B \rvert = \lvert L_n \rvert$.
\end{proof}

Due to Proposition~\ref{thm:samplesOfStructures}, and the fact that $\bigcupdot_{\fB\in L_n} \fB$ has the same size as $\vert L_n \rvert $, we will now often assume that samplings for structures contain only one element $\fB_n \in S_n(\Th(\fA))$ in $L_n$ (for all $n \in \N$). We will write $(\fB_n)_{n\in \N}$ instead of $(\set{\fB_n})_{n\in \N}$. It will become clear in Section~\ref{sec:samplingForCombinations} why we will not assume that sampling of structures make this assumption in general.

To justify why our definition of \enquote{sampling} allows $L_n$ to contain multiple structures in general, we would like to give an example of a theory $T$  which has a sampling but no sampling $(L_n)$ of $T$ has only one element in $L_n$ for some $n\geq 2$.

\begin{example}[similar to Example~2.2 in~\cite{BodirskyGreinerLMCS19}]\label{ex:noJHP}
Let $\tau$ be a signature consisting of the unary symbols $O,P,Q$ and the binary symbol $I$. Let $T$ be the set of the following sentences
\begin{align*}
 &\forall x,y.((O(x) \AND O(y))\Rightarrow x=y),\\
 &\forall x.\neg (P(x) \AND Q(x)), \quad \text{and}\\
 &\forall x,y.(I(x,y) \Leftrightarrow \neg (x=y)).
\end{align*}
The following is a sampling for $T$ which is computable in polynomial time: For $n\in \N$ let $L_n$ consist of $\fB_1$ and $\fB_2$ defined on $\set{1,\dots, 2n}$ where $P^{\fB_1} = P^{\fB_2} \ceq \set{1,\dots, n}$, $Q^{\fB_1} = Q^{\fB_2} \ceq \set{n+1, \dots, 2n}$, $O^{\fB_1} \ceq\set{1}$ and $O^{\fB_2} \ceq \set{n+1}$. In both structures, the $I$-relation denotes the inequality relation.
While it is easy to check that $(L_n)_{n\in \N}$ is a sampling for $T$, there is no sampling with only one element in $L_n$ for some $n\geq 2$. To prove this, consider the formulas $\phi_1 \cequiv O(x) \AND P(x)$, which is satisfiable in $\fB_1$, and $\phi_2 \ceq O(y) \AND Q(y)$, which is satisfiable in $\fB_2$. Assume that both are satisfiable in some $\tau$-structure $\fB$. Then there exists $a \in O^{\fB}\cap P^{\fB}$ and $b\in O^{\fB}\cap Q^{\fB}$. 
If $a=b$, then $\psi_1 \ceq P(x) \AND Q(x)$ is satisfiable in $\fB$. 
If $a \neq b$, then $\psi_2 \ceq O(x)\AND O(y) \AND I(x,y)$ is satisfiable in $\fB$ or $\psi_3 \ceq I(x,y) \AND x=y$ is satisfiable in $\fB$ (in case $I^{\fB}$ is not the complement of equality). However, none of $\psi_1, \psi_2, \psi_3$ is satisfiable in some model of $T$, contradiction.
\end{example}

To characterise theories with a computable sampling, we need the following definition. 
\begin{definition}
Let $\tau$ be a relational signature and let $\phi$ be a conjunction of atomic  $\tau$-formulas with variables $x_1, \dots, x_n$. The \emph{canonical database} $D(\phi)$ is defined as the structure with domain $x_1, \dots, x_n$ such that a tuple $t$ is in a relation $R^{D(\phi)}$ for $R \in \tau$ if and only if $R(t)$ is a conjunct of $\phi$.
\end{definition}

Clearly, there is a homomorphism from $D(\phi)$ to $\fA$ if and only if $\phi$ is satisfiable in $\fA$.

\begin{proposition}\label{thm:decidableEqSampling}
 Let $T$ be a theory with finite relational signature. Then, $T$ has a computable sampling if and only if $\CSP(T)$ is decidable.
\end{proposition}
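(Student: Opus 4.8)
First, suppose $T$ has a computable sampling $(L_n)_{n\in\N}$. Then, given an instance $\phi$ of $\CSP(T)$, I would let $n$ be the number of variables occurring in $\phi$, compute $L_n$, and test for each $\fB\in L_n$ --- by running through the finitely many maps from the variables of $\phi$ into the finite set $B$ --- whether $\phi$ is satisfiable in $\fB$, answering \enquote{yes} exactly if some $\fB\in L_n$ passes. By the definition of a sampling this is correct, so $\CSP(T)$ is decidable. This direction uses only computability of the sampling, not polynomiality.

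For the converse, suppose $\CSP(T)$ is decidable. The main idea is that, for a fixed $n$, the conjunctions of atomic $\tau$-formulas with at most $n$ variables correspond (via canonical databases) to $\tau$-structures on the domain $\{1,\dots,n\}$, of which there are only finitely many since $\tau$ is finite, and the decision procedure for $\CSP(T)$ lets us single out exactly those structures that homomorphically map into a model of $T$. Concretely, I would first note that it suffices to handle equality-free conjunctions, since quotienting the variable set by the equivalence relation generated by the equality conjuncts turns any $\phi$ into an equality-free conjunction with no more variables that is equisatisfiable with $\phi$ in every $\tau$-structure (and a $\phi$ containing $\bot$ satisfies the sampling condition vacuously, being satisfiable nowhere). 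For an equality- and $\bot$-free $\phi$ with variables among $x_1,\dots,x_n$, the canonical database $D(\phi)$ --- after renaming $x_i$ to $i$ and padding the domain with isolated points up to $\{1,\dots,n\}$ --- is a $\tau$-structure on $\{1,\dots,n\}$, and conversely every $\tau$-structure $\fB$ with domain $\{1,\dots,n\}$ is of this form, namely for $\phi_\fB \ceq \bigwedge_{R\in\tau}\bigwedge_{t\in R^\fB}R(t)$. I would then define
\[
 L_n \ceq \set{\fB : \text{$\fB$ is a $\tau$-structure with domain $\{1,\dots,n\}$ and $\phi_\fB$ is satisfiable in a model of $T$}},
\]
which, by the above together with an appeal to the decision procedure for $\CSP(T)$ applied to each $\phi_\fB$, is a finite set of finite $\tau$-structures computable from $n$.

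It then remains to check that $(L_n)_{n\in\N}$ is a sampling. Given $\phi$ with at most $n$ variables, I may assume it equality- and $\bot$-free; let $\fB$ be the associated structure on $\{1,\dots,n\}$, so that $\phi_\fB$ equals $\phi$ up to repeated conjuncts and $\fB$ is a copy of $D(\phi)$. If $\phi$ is satisfiable in some model $\fA$ of $T$, then so is $\phi_\fB$, whence $\fB\in L_n$, and $\phi$ is satisfiable in $\fB$ via $x_i\mapsto i$. Conversely, if $\phi$ is satisfiable in some $\fB'\in L_n$, then there is a homomorphism $D(\phi)\to\fB'$, while $\fB'\in L_n$ gives a homomorphism $\fB'\to\fA$ into some model $\fA$ of $T$ (since $D(\phi_{\fB'})$ is a copy of $\fB'$); composing shows $D(\phi)\to\fA$, i.e.\ $\phi$ is satisfiable in the model $\fA$ of $T$. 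Hence $\phi$ is satisfiable in a model of $T$ if and only if it is satisfiable in some element of $L_n$. I do not expect a genuine obstacle here; the only points needing care are the preprocessing of equality atoms and the degenerate case in which $T$ has no model, where the construction correctly returns $L_n=\emptyset$ for all $n$. In particular there is no complexity concern, because a computable sampling need not be polynomial.
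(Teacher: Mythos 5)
Your proof is correct and follows essentially the same route as the paper: decide $\CSP(T)$ by checking the finitely many maps into the computed samples, and conversely build $L_n$ from the canonical databases of all (finitely many) satisfiable conjunctions on $n$ variables, selected via the decision procedure. Your explicit preprocessing of equality conjuncts and $\bot$ is a nice touch that the paper's verification leaves implicit, but it does not change the argument.
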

\begin{proof}
Let $\tau$ be the signature of $T$.
 Suppose there exists a computable sampling $(L_n)_{n\in \N}$ for $T$. Let $\phi$ be a conjunction of atomic $\tau$-formulas with at most $n$ variables. Then there are only finitely many maps from $D(\phi)$ to structures in $L_n$ and for each of them we can determine whether it is a homomorphism or not. Therefore $\CSP(T)$ is decidable. 
 
 Conversely, suppose that $\CSP(T)$ is decidable. There are only finitely many atomic formulas on $n$ variables because $\tau$ is finite and the arity of each relation is finite. Hence, there are only finitely many conjunctions of atomic formulas with at most $n$ variables such that no conjunct is repeated. Using the decision procedure for $\CSP(T)$, we may determine which of the conjunctions are satisfiable in some model of $T$ and define $L_n$ as the set of all canonical databases of the satisfiable conjunctions. Then, by construction, $(L_n)$ is a sampling for $T$.
\end{proof}

To determine whether a theory has a polynomial sampling the following general observation is helpful. 
\begin{observation}\label{thm:HomToStruct_Substruct}
Let $\fA$ be a  structure with finite relational signature $\tau$.
By Proposition~\ref{thm:samplesOfStructures}, determining whether a polynomial sampling for $\fA$  exists is equivalent to determining whether for all $n\in \N$ there exists $\fB_n \in S_n(\Th(\fA))$  such that $(\fB_n)$ is a polynomial sampling. If $\fB_n \in S_n(\Th(\fA))$ and there exists a homomorphism $h \colon \fB_n \rightarrow \fA$, then $h(\fB_n) \in S_n(\Th(\fA))$ and $\lvert h(B_n) \rvert \leq \lvert B_n \rvert$.
  If we can prove that each element of $S_n$ homomorphically maps to $\fA$ we can therefore assume that $L_n$ consists of a single substructure of $\fA$. An element of $S_n(\Th(\fA))$ which is a substructure of $\fA$ is called an \emph{$n$-universal substructure} of $\fA$. Minimal sizes of $n$-universal substructures are easier to determine and have been explored in the past \cite{Moon64_OnMinimalNUniversalGraphs, GoldbergLivshits68_OnMinimalUniversalTrees, LozinRudolf07_MinimalUniversalBipartiteGraphs}.

\end{observation}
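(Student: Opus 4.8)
The plan is to verify the three assertions of the observation separately, each by chasing homomorphisms between $\fB_n$, its homomorphic images and $\fA$, and by using repeatedly that a conjunction $\phi$ of atomic formulas is satisfiable in a structure $\fC$ exactly when $D(\phi)$ maps homomorphically to $\fC$, together with the identity $\CSP(\Th(\fA)) = \CSP(\fA)$. Throughout write $T \ceq \Th(\fA)$.

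For the opening equivalence I would first note that $T$ has the JHP: indeed $\fA$ is a model of $T$ with $\CSP(\fA) = \CSP(T)$, so Proposition~\ref{thm:universalModelJHP} yields the JHP. Hence Proposition~\ref{thm:samplesOfStructures} applies and collapses any polynomial sampling $(L_n)$ for $\fA$ into the one-element-per-level sampling $(\{\bigcupdot_{\fB\in L_n}\fB\})_n$ of the same size $\lvert L_n\rvert$, which is therefore still polynomial. It then only remains to observe that a one-element sequence $(\{\fB_n\})_n$ is a sampling for $T$ if and only if $\fB_n\in S_n(T)$ for every $n$ — this is immediate by unwinding the two definitions — and that conversely any such sequence is a sampling. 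This is exactly the claimed equivalence.

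For the statement about the homomorphic image I would let $h(\fB_n)$ denote the structure with domain $h(B_n)$ whose relations are the $h$-images of the relations of $\fB_n$. Then $h$ corestricts to a (surjective) homomorphism $\fB_n \to h(\fB_n)$, and since $h$ is a homomorphism into $\fA$, the inclusion $h(B_n)\hookrightarrow A$ is a homomorphism $h(\fB_n)\to\fA$. Now take $\phi$ with at most $n$ variables: if $\phi$ is satisfiable in some model of $T$ then, using $\CSP(T) = \CSP(\fA)$, it is satisfiable in $\fA$, hence in $\fB_n$ because $\fB_n\in S_n(T)$, hence in $h(\fB_n)$ by composing the satisfying assignment (equivalently $D(\phi)\to\fB_n$) with $\fB_n\to h(\fB_n)$; conversely, satisfiability in $h(\fB_n)$ transports along $h(\fB_n)\to\fA$ to $\fA$, hence to a model of $T$. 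So $h(\fB_n)\in S_n(T)$, and $\lvert h(B_n)\rvert\le\lvert B_n\rvert$ since $h(B_n)$ is a set-theoretic image of $B_n$.

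Finally, if every element of $S_n(T)$ maps homomorphically to $\fA$, I would combine the first two parts: take $L_n=\{\fB_n\}$ with $\fB_n\in S_n(T)$, pick $h\colon\fB_n\to\fA$, replace $\fB_n$ by $h(\fB_n)\in S_n(T)$, and then — this is the one step that needs care, since a homomorphic image of $\fB_n$ need not be an \emph{induced} substructure of $\fA$ — replace $h(\fB_n)$ by the substructure $\fA[h(B_n)]$ of $\fA$ on the same domain. Because $R^{h(\fB_n)}\subseteq R^{\fA}\cap h(B_n)^{\ar(R)} = R^{\fA[h(B_n)]}$, the identity on $h(B_n)$ is a homomorphism $h(\fB_n)\to\fA[h(B_n)]$, while $\fA[h(B_n)]\hookrightarrow\fA$; the same back-and-forth argument as above then gives $\fA[h(B_n)]\in S_n(T)$. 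Since $\lvert\fA[h(B_n)]\rvert=\lvert h(B_n)\rvert\le\lvert B_n\rvert=\lvert L_n\rvert$, the modified sequence is a sampling of $\fA$ of no larger size, each level of which consists of a single substructure of $\fA$, hence polynomial whenever $(L_n)$ is. I do not expect a genuine obstacle here; the only subtle point is exactly this passage from a homomorphic image to an induced substructure of $\fA$ and the re-verification that membership in $S_n(T)$ is preserved.
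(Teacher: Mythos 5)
Your proposal is correct and follows essentially the same route as the paper's own inline justification: the JHP of $\Th(\fA)$ via Proposition~\ref{thm:universalModelJHP} combined with Proposition~\ref{thm:samplesOfStructures} to collapse to one structure per level, the observation that a homomorphic image of an element of $S_n(\Th(\fA))$ stays in $S_n(\Th(\fA))$, and finally passing to a substructure of $\fA$. Your extra step of distinguishing the homomorphic image from the induced substructure $\fA[h(B_n)]$ and re-checking membership in $S_n(\Th(\fA))$ is a harmless (and welcome) precision, not a different argument.
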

In general, $S_n(\Th(\fA))$ may contain structures which are smaller than the smallest $n$-universal substructure of $\fA$ and Example~\ref{ex:Succ2Col} will provide an example for this case.

To exemplify the use of Observation~\ref{thm:HomToStruct_Substruct} we will now use it to prove that a certain theory does not have a polynomial sampling. The theory $T$ in Lemma~\ref{thm:radoNoSampling} is contained in the theory of every undirected, loopless graphs with a binary relation symbol $E$ for the edge relation
and a binary relation symbol $N$ for the set of all pairs of distinct elements that are not related by an edge.

\begin{lemma}\label{thm:radoNoSampling}
 Let $\tau$ consist of the binary relation symbols $E$ and $N$ and let $T$ contain only the sentence 
 \begin{align*}
  \forall x,y.\big(&\neg E(x,x) \AND \neg(E(x,y) \AND \neg E(y,x)) \\
  &\neg N(x,x) \AND \neg (E(x,y) \AND N(x,y)) \big).
 \end{align*}
 Then there is a sampling for $T$ but not a polynomial sampling.
\end{lemma}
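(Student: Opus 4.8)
The plan is to establish the two assertions separately, the existence of a sampling being routine and the absence of a polynomial one being the substance.

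\emph{Existence of a sampling.} By Proposition~\ref{thm:decidableEqSampling} it suffices to show that $\CSP(T)$ is decidable, and this is immediate: given a conjunction $\phi$ of atomic $\set{E,N}$-formulas, identify the variables along the equality conjuncts and declare $\phi$ unsatisfiable in every model of $T$ exactly when some conjunct is $\bot$, or some identified variable carries an $E$- or an $N$-loop, or two distinct identified variables carry both an $E$-conjunct (in either orientation) and an $N$-conjunct (in either orientation); otherwise $\phi$ is satisfiable in a model of $T$. Sufficiency of these checks will follow from the universal model built next, so $\CSP(T)$ is in fact decidable in polynomial time, and a computable sampling exists.

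\emph{A universal model.} The class of models of $T$ is closed under disjoint unions, so $T$ has the JHP and Proposition~\ref{thm:universalModelJHP} yields a model $\fA$ with $\CSP(\fA)=\CSP(T)$; I will use the concrete one with domain the vertex set of the countable Rado graph $\fR$, with $E^{\fA}\ceq E^{\fR}$ and $N^{\fA}\ceq\set{(u,v): u\neq v,\ \set{u,v}\notin E^{\fR}}$. That $\fA\models T$ is clear, and $\CSP(\fA)=\CSP(T)$ follows because the canonical database of any conjunction satisfiable in some model of $T$, after factoring out the forced identifications, has no $E$- or $N$-loop and no pair of distinct vertices carrying both an $E$- and an $N$-arc, so the universality of $\fR$ for finite graphs lets one embed this ``mixed graph'' into $\fA$. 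Since $\CSP(T)=\CSP(\fA)=\CSP(\Th(\fA))$, samplings for $T$ coincide with samplings for $\fA$. Next I will check the hypothesis of Observation~\ref{thm:HomToStruct_Substruct}: for $n\geq 2$, every $\fB\in S_n(\Th(\fA))$ has no $E$- or $N$-loop (else the one-variable instance $E(x,x)$, resp.\ $N(x,x)$, would be satisfiable in $\fB$ but in no model of $T$) and no pair of distinct elements with both an $E$- and an $N$-arc (else a two-variable instance such as $E(x,y)\AND N(x,y)$ or $E(x,y)\AND N(y,x)$ would be satisfiable in $\fB$ but in no model of $T$); by universality of $\fR$ again, any such $\fB$ embeds into $\fA$ as an induced subgraph and in particular maps homomorphically to $\fA$. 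Hence, by Observation~\ref{thm:HomToStruct_Substruct}, it is enough to rule out a polynomial sampling of the form $(\fB_n)_{n\in\N}$ in which every $\fB_n$ is a finite substructure of $\fA$.

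\emph{No polynomial sampling.} Suppose $(\fB_n)_{n\in\N}$ were such a sampling with $\lvert B_n\rvert\leq p(n)$ for a polynomial $p$. For each graph $H$ on vertex set $\set{1,\dots,n}$ let $\phi_H$ be the $n$-variable conjunction consisting of $E(x_i,x_j)$ for every edge $\set{i,j}$ of $H$ and of $N(x_i,x_j)$ for every non-edge $\set{i,j}$ with $i<j$. Then $\phi_H$ is satisfiable in $\fA$ (embed $H$ into $\fR$), hence in $\fB_n$; any satisfying assignment is injective, since an identification would force an $E$- or an $N$-loop in the irreflexive structure $\fA$, and because in the substructure $\fB_n$ of $\fA$ the relation $N^{\fB_n}$ consists precisely of the non-edges of the underlying graph $(B_n;E^{\fB_n})$, such an assignment realises $H$ as an induced subgraph of $(B_n;E^{\fB_n})$. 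Thus $(B_n;E^{\fB_n})$ is a graph on at most $p(n)$ vertices containing every $n$-vertex graph as an induced subgraph, which is impossible for polynomial $p$: a graph on $N$ vertices has at most $\binom{N}{n}\leq N^n$ non-isomorphic induced subgraphs on $n$ vertices, whereas there are at least $2^{\binom{n}{2}}/n!$ pairwise non-isomorphic graphs on $n$ vertices, forcing $N\geq 2^{\Omega(n)}$; this is (a weak form of) the bound of Moon~\cite{Moon64_OnMinimalNUniversalGraphs}, and the contradiction finishes the proof.

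\emph{Main obstacle.} The genuinely delicate points are the precise description of which conjunctions are satisfiable in a model of $T$ and the verification that the Rado-based $\fA$ is a universal model — both relying on the extension property of $\fR$ — together with the reduction via Observation~\ref{thm:HomToStruct_Substruct} from arbitrary samples to a single substructure of $\fA$, which hinges on the loop- and pair-consistency constraints that every sample must satisfy. Once these are in place the lower bound is a routine counting argument.
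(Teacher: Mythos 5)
Your proof is correct and takes essentially the same route as the paper: decidability yields a computable sampling, the JHP and Observation~\ref{thm:HomToStruct_Substruct} reduce the question to a single sample that maps homomorphically into a Rado-based universal model, and an exponential lower bound for graphs containing all $n$-vertex graphs as induced subgraphs gives the contradiction. The only deviations are cosmetic and sound: you build the homomorphism into the Rado model directly instead of the paper's symmetrization of $E^{\fB}$, and you replace the citation of Alon's optimal induced-universal-graph bound by an elementary counting estimate, which suffices here.
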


\begin{proof} 
  First notice that $\CSP(T)$ is decidable (in polynomial-time) because it suffices to check, after eliminating equalities by substitution of variables, that the instance does not contain $\bot$, $E(x,x)$, $N(x,x)$, $E(x,y) \AND N(x,y)$ or $E(y,x) \AND N(x,y)$ for any two variables $x,y$.
  Therefore, there exists a sampling $(L_n)$ for $T$ by Proposition~\ref{thm:decidableEqSampling}.
  Fix $n \geq 2$. 
  Notice that the disjoint union of two models of $T$ is again a model of $T$. Therefore, $T$ has the JHP and we may without loss of generality assume that $L_n$ contains only one structure $\fB$ due to Proposition~\ref{thm:samplesOfStructures}.
  For the structure $(B; E^{\fB})$ we know that it is without loops as otherwise $E(x,x)$ is satisfiable in $\fB$, contradiction. 
  Furthermore, we may assume that $(B; E^{\fB})$ is an undirected graph, because adding the reverse  direction to an edge does not increase the size of $\fB$ and does not change the satisfiability of instances in $\fB$. To prove the latter, notice that instances satisfiable in $\fB$ cannot become unsatisfiable in $\fB'$, where $E(a,b)$ has been added, because all homomorphisms to $\fB$ are also homomorphisms to $\fB'$. Suppose there exists an instance $\phi$ which is not satisfiable in $\fB$ but there exists a homomomorphism $h\colon D(\phi) \rightarrow \fB'$. Let $\phi'$ be the instance where all conjuncts $E(x,y)$ in $\phi$ with $h(x) = a$ and $h(y)=b$ are replaced by $E(y,x)$. Then $h$ is a homomomorphism from $D(\phi')$ to $\fB$, and therefore $\phi'$ is satisfiable in some model of $T$. As $E$ is symmetric in all models of $T$, $\phi$ is satisfiable in some model of $T$ and therefore in $\fB$, contradiction.
    
  Lastly, we know that $N^{\fB} \subseteq B^2 \setminus E^{\fB}$ must hold, as $E(x,y) \AND N(x,y)$ is satisfiable in $\fB$ otherwise.   
  Therefore, $\fB$ is homomorphic to a model $\fR$ of $T$ in which $\forall x,y.(N(x,y) \Leftrightarrow x\neq y \AND \neg E(x,y))$ holds and which contains all undirected loopless finite graphs as induced subgraphs. Then $\lvert \fB \rvert$ is at least the size of the smallest $n$-universal substructure of $\fR$ by Observation~\ref{thm:HomToStruct_Substruct}. However, due to results by Noga Alon~\cite{Alon17_asymtoticallyoptimalinduceduniversalgraphs} we know that any substructure of $\fR$ which contains all graphs of size at most $n$ as induced subgraphs has size exponential in $n$ and that this does not depend on the choice of $\fR$.
\end{proof}

Theories with polynomial samplings include all theories of finite structures, the theory of successor on the natural numbers, and the theories of structures in Lemma~\ref{thm:QhasSampling} and Lemma~\ref{thm:baseStrHaveSampling} below.

\section{Sampling for Unions of Theories}\label{sec:samplingForCombinations}

Let $T_1, T_2$ be two theories with disjoint relational signatures and that $T_1$ and $T_2$ have polynomial samplings. In this section we will present sufficient conditions on $T_1$ and $T_2$ such that $T_1 \cup T_2$ has polynomial sampling.
To this end, we will need to construct models for $T_1 \cup T_2$.  Let $\tau_1, \tau$ be relational signatures with $\tau_1 \subseteq \tau$ and $\fA$ a $\tau$-structure. The \emph{$\tau_1$-reduct} of $\fA$, written $\fA^{\tau_1}$, is the structure we get when all relations with symbols not contained in $\tau_1$ are removed from $\fA$. 
The following definition goes back to Tinelly and Ringeisen~\cite{TinelliRingeissen03_UnionsOfNonDisjointTheoriesAndCombinationsOfSatisfiabilityProcedures}. 
\begin{definition}
 Let $\fA_1$ and $\fA_2$ be $\tau_1$ and $\tau_2$ structures respectively. A $\tau_1\cup\tau_2$-structure $\fA$ is a \emph{fusion} of $\fA_1$ and $\fA_2$ if and only if $\fA^{\tau_i}$ is isomorphic to $\fA_i$ for $i=1$ and $i=2$.
\end{definition}

\begin{proposition}[Proposition~1 and Lemma~1 in~\cite{BaaderSchulz01}]\label{thm:fusionIsModel}
 For $i=1,2$, let $T_i$ be a $\tau_i$ theory. 
 A $\tau_1 \cup \tau_2$-structure $\fA$ is a model of $T_1 \cup T_2$ if and only if $\fA$ is a fusion of a model for $T_1$ and a model for $T_2$. Furthermore, two structures have a fusion if and only if their domains have the same cardinality.
\end{proposition}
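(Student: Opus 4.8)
The plan is to unwind the two definitions and lean on the elementary model-theoretic fact that whether a $\tau_i$-sentence holds in a $\tau_1 \cup \tau_2$-structure $\fA$ depends only on the $\tau_i$-reduct $\fA^{\tau_i}$, combined with the invariance of first-order satisfaction under isomorphism. Everything else is bookkeeping about reducts and cardinalities.

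First I would prove the \enquote{if} direction of the biconditional. Assume $\fA$ is a fusion of a model $\fA_1$ of $T_1$ and a model $\fA_2$ of $T_2$. Every sentence in $T_1$ is a $\tau_1$-sentence, so its truth value in $\fA$ agrees with its truth value in the reduct $\fA^{\tau_1}$; since $\fA^{\tau_1}$ is isomorphic to $\fA_1$ and $\fA_1 \models T_1$, invariance under isomorphism gives $\fA \models T_1$. The symmetric argument with $\tau_2$ yields $\fA \models T_2$, hence $\fA \models T_1 \cup T_2$. Conversely, if $\fA \models T_1 \cup T_2$, then in particular $\fA \models T_1$, and the same reduct argument shows $\fA^{\tau_1} \models T_1$; likewise $\fA^{\tau_2} \models T_2$. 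Since each reduct is (trivially) isomorphic to itself, $\fA$ is by definition a fusion of the model $\fA^{\tau_1}$ of $T_1$ and the model $\fA^{\tau_2}$ of $T_2$.

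For the \enquote{furthermore}, note that the two reducts of any $\tau_1 \cup \tau_2$-structure share its domain, so if $\fA$ is a fusion of $\fA_1$ and $\fA_2$ then $\lvert A_1 \rvert = \lvert A \rvert = \lvert A_2 \rvert$. Conversely, suppose $\lvert A_1 \rvert = \lvert A_2 \rvert$ and fix a bijection $g \colon A_2 \rightarrow A_1$. Transport the $\tau_2$-structure of $\fA_2$ along $g$ onto $A_1$, i.e.\ put $R^{\fA} \ceq \set{(g(a_1), \dots, g(a_k)) : (a_1, \dots, a_k) \in R^{\fA_2}}$ for each $k$-ary $R \in \tau_2$, and set $R^{\fA} \ceq R^{\fA_1}$ for $R \in \tau_1$. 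The resulting $\tau_1 \cup \tau_2$-structure $\fA$ on domain $A_1$ has $\fA^{\tau_1} = \fA_1$ and $\fA^{\tau_2}$ isomorphic to $\fA_2$ via $g$, so it is a fusion of $\fA_1$ and $\fA_2$.

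I do not expect a genuine obstacle here; the only point requiring care is to keep the appeal to isomorphism-invariance explicit rather than tacit, since \enquote{fusion} is defined only up to isomorphism of the reducts, not up to equality — this is exactly why the transport-of-structure step in the last paragraph is phrased via a bijection $g$ instead of simply identifying $A_1$ with $A_2$.
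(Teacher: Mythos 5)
Your proof is correct and takes essentially the same route as the source the paper relies on: the paper does not prove this proposition itself but cites Proposition~1 and Lemma~1 of Baader and Schulz, and its only remark on the proof — that any bijection between the domains of two structures with disjoint signatures induces a fusion — is exactly your transport-of-structure step, while the reduct/isomorphism-invariance bookkeeping handles the biconditional as you describe. The one point to make explicit is that defining $R^{\fA} \ceq R^{\fA_1}$ for $R \in \tau_1$ and transporting $R^{\fA_2}$ along $g$ for $R \in \tau_2$ is only unambiguous because $\tau_1 \cap \tau_2 = \emptyset$, the standing assumption of this setting.
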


The proof of Lemma~1 in~\cite{BaaderSchulz01} essentially argues that any bijection between the domains of two structures with disjoint signatures defines a fusion of the two structures, which is a fact we will use later on.

It is an easy observation that an instance $\phi_1 \AND \phi_2$ of $\CSP(T_1 \cup T_2)$ is satisfiable if and only if for $i=1$ and $i=2$ there exist models $\fA_i$ of $T_i$ with $\lvert A_1 \rvert = \lvert A_2 \rvert$ such that $\phi_i$ is satisfiable in $\fA_i$ and the satisfying assignments of $\phi_1$ and $\phi_2$ identify exactly the same variables. 
The following notion describes a property of a sampling that allows us to transfer identifications of variables between models and samples.

\begin{definition}
 We call a sampling $(L_n)_{n\in \N}$ for $T$ \emph{equality-matching} if for all $n\in \N$ and all conjunctions of atomic $\tau$-formulas $\phi$ with variables in $\set{x_1, \dots, x_n}$ 
and for any conjunction $\psi$ of equalities and negated equalities on $\set{x_1, \dots, x_n}$, there exists a model of $T$ in which $\phi \AND \psi$ is satisfiable if and only if there exists $\fB\in L_n$ such that $\phi \AND \psi$ is satisfiable in $\fB$.
 A sampling is \emph{equality-matching} for a structure $\fA$ if it is equality-matching for $\Th(\fA)$.
\end{definition}

To be equality-matching will be the property that samplings for $T_1$ and $T_2$ must satisfy in order to construct a sampling for $T_1 \cup T_2$ in Theorem~\ref{thm:Both_NoAlg+InjSamp}. 
An example for an equality-matching sampling is the one constructed in Example~\ref{ex:noJHP} because $I$ is the negation of equality in all models of $T$ and the constructed samples and therefore, the instances of $\CSP(T)$ themselves can specify how to identify variables.  Also, the sampling constructed in the proof of Proposition~\ref{thm:decidableEqSampling} is equality-matching if there exists $I\in \tau$ such that in all models of $T$ we have $I(x,y) \Leftrightarrow \neg(x= y)$.

Another class of theories where an equality-matching sampling always exists are $\omega$-categorical theories. A theory is \emph{$\omega$-categorical} if it has only one countable model (up to isomorphism). 
For a structure $\fA$, the \emph{orbit} of $t \in A^k$ is $\set{\alpha(t) \mid \alpha\in \Aut(\fA)}$ where $\Aut(\fA)$ is the set of automorphisms of $\fA$ and $\alpha$ is applied componentwise.

\begin{proposition}
 Let $T$ be an $\omega$-categorical theory with finite relational signature. Then there exists an equality-matching sampling for $T$.
\end{proposition}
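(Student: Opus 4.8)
The plan is to use $\omega$-categoricity to reduce the problem to a finite amount of data. Recall that by the theorem of Ryll-Nardzewski (Engeler, Svenonius), an $\omega$-categorical theory $T$ with finite relational signature $\tau$ has, for each $n$, only finitely many $\tau$-formulas in $n$ free variables up to equivalence modulo $T$; equivalently, if $\fA$ is the countable model of $T$, then $\Aut(\fA)$ has only finitely many orbits on $A^n$ for each $n$. The key point is that the satisfiability of a conjunction $\phi\wedge\psi$ of atomic $\tau$-formulas together with equalities and negated equalities on $\{x_1,\dots,x_n\}$ in \emph{some} model of $T$ is equivalent to its satisfiability in the countable model $\fA$ (since atomic satisfiability is preserved under passing to substructures and elementary extensions, and every model of an $\omega$-categorical theory contains a copy of $\fA$ or embeds into an elementary extension of $\fA$; more directly, $\CSP(T)=\CSP(\fA)$ because $T$ has the JHP here, which follows from $\omega$-categoricity).

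The construction is then as follows. First I would fix $n$ and consider the finitely many orbits $O_1,\dots,O_m$ of $\Aut(\fA)$ on $A^n$. For each orbit $O_j$ pick a representative tuple $a^{(j)}=(a^{(j)}_1,\dots,a^{(j)}_n)\in A^n$, and let $\fB_n^{(j)}$ be the substructure of $\fA$ induced on the set $\{a^{(j)}_1,\dots,a^{(j)}_n\}$; this is a finite $\tau$-structure with at most $n$ elements. Set $L_n\ceq\{\fB_n^{(j)}:1\le j\le m\}$. I claim $(L_n)_{n\in\N}$ is an equality-matching sampling for $T$.

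To verify this, take $\phi$ a conjunction of atomic $\tau$-formulas on $\{x_1,\dots,x_n\}$ and $\psi$ a conjunction of equalities and negated equalities on the same variables. If $\phi\wedge\psi$ is satisfiable in some $\fB\in L_n$, then it is satisfiable in a substructure of $\fA$, hence in $\fA$, hence in a model of $T$. Conversely, if $\phi\wedge\psi$ is satisfiable in some model of $T$, then (by the remark above) it is satisfied by some assignment $s\colon\{x_1,\dots,x_n\}\to A$; the tuple $(s(x_1),\dots,s(x_n))$ lies in some orbit $O_j$, so there is $\alpha\in\Aut(\fA)$ with $\alpha(s(x_i))=a^{(j)}_i$ for all $i$. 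Since $\alpha$ preserves all atomic formulas and also the pattern of equalities and inequalities among coordinates, the assignment $x_i\mapsto a^{(j)}_i$ satisfies $\phi\wedge\psi$ in $\fA$ restricted to $\{a^{(j)}_1,\dots,a^{(j)}_n\}$, i.e.\ in $\fB_n^{(j)}\in L_n$. This shows $(L_n)$ is equality-matching (taking $\psi\cequiv\true$, or rather the empty conjunction, recovers the ordinary sampling condition, so $(L_n)$ is in particular a sampling).

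The main obstacle, and the only nontrivial ingredient, is the appeal to Ryll-Nardzewski to guarantee that each $L_n$ is \emph{finite}; everything else is a routine application of the fact that automorphisms preserve atomic formulas and (in)equalities. I would therefore state the Ryll-Nardzewski theorem explicitly as the tool being invoked, note that finiteness of the relational signature is what makes the theorem applicable (atomic formulas of bounded arity suffice to name the structure), and observe that the argument gives no control on $|L_n|$, so this proposition yields an equality-matching sampling but not necessarily a polynomial one.
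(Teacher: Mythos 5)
Your proof is correct and takes essentially the same approach as the paper: reduce satisfiability over models of $T$ to the countable model, invoke the Engeler--Ryll-Nardzewski--Svenonius theorem to obtain finitely many orbits of $n$-tuples, take substructures of $\fA$ induced by orbit representatives, and transfer satisfying assignments via automorphisms, which preserve both atomic formulas and (in)equalities. The only cosmetic difference is that the paper merges all representative tuples into a single structure $\fB_n$ (so $L_n$ is a singleton), whereas you keep one induced substructure per orbit; both variants work.
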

\begin{proof}
  If $T$ has an infinite model $\fA$ then there exists a countably infinite model $\fA'$ of $T$ such that $\Th(\fA) = \Th(\fA')$ by the Downward L\"owenheim-Skolem Theorem (see~\cite{HodgesLong}, page~90).
 If the unique countable model $\fA$ of $T$ is finite, then $(L_n)$ with $L_n \ceq \set{\fA}$ for all $n \in \N$ is an equality-matching sampling for $T$ and we are done.
 We may therefore assume that an instance $\phi$ in conjunction with equalities and disequalities is satisfiable in some model of $T$ if and only if it is satisfiable in a fixed countably infinite structure $\fA$.
By the theorem of Engeler, Ryll-Nardzewski and Svenonius (see~\cite{HodgesLong} page~341) there are only finitely many orbits of $n$-tuples in $\fA$. 
To construct an $n$-sample for $\fA$ we pick a tuple from each of the finitely many orbits of length $n$ and define $\fB_n$ to be the substructure of $\fA$ whose domain is the union of all elements appearing in these tuples. We claim that $(\fB_n)$ is an equality-matching sampling for $T$.

Let $\phi$ be a conjunction of atomic formulas and $\psi$ a conjunction of equalities and negated equalities, both on variables $x_1, \dots, x_n$. Clearly, if there exists a homomorphism from $D(\phi\AND \psi)$ to $\fB_n$ then there exists a homomorphism from $D(\phi \AND \psi)$ to $\fA$ because $\fB_n$ embedds into $\fA$. If $h\colon D(\phi\AND \psi) \rightarrow \fA$ is a homomorphism, then the image of $h$ is a tuple $t$ of length at most $n$. Then, there is a tuple $t'$ in $\fB_n$ with the same orbit as $t$. Hence, we can choose $\alpha \in \Aut(\fA)$ such that $\alpha(t) = t'$ and $\alpha\circ h$ is a homomorphism from $D(\phi\AND \psi)$ to $\fB_n$.
\end{proof}

It follows from Proposition~\ref{thm:decidableEqSampling}
that there are are $\omega$-categorical structures without computable sampling, because some of those structures have an undecidable CSP~\cite{BodirskyNesetrilJLC}.
There are also theories with a sampling computable in polynomial time, but without polynomial equality-matching sampling. 
Indeed, let $\tau$ contain the binary relations $E$ and $N$ and let $T$ contain only
 \begin{align*}
  \forall x,y \, \big(&\neg(E(x,y) \AND \neg E(y,x)) \AND \\
  &\neg (E(x,y) \AND N(x,y) \AND x\neq y)\big).
 \end{align*}
Then $L_n \ceq (\set{0}, E,N)$ with $E = N \ceq \set{(0,0)}$ for all $n\in \N$ yields a sampling for $T$. 
If there exists a polynomial equality-matching sampling for $T$, then there exists a polynomial sampling for $T'\ceq T \cup \Th(\Z; \neq)$ by Proposition~\ref{thm:InjSamp+ECL}. However, in $\CSP(T')$ we can define relations $E', N'$ which must satisfy the theory from Lemma~\ref{thm:radoNoSampling} by $E'(x,y) \ceq E(x,y) \AND x\neq y$ and $N'(x,y) \ceq N(x,y) \AND x\neq y$. Therefore, a polynomial equality-matching sampling for $T'$ cannot exist.

It is in general not true that if $(L_n)_{n\in \N}$ is equality-matching then $(L'_n)_{n\in \N} \ceq (\bigcupdot_{\fB\in L_n} \fB)_{n\in \N}$ is equality-matching as well. An example is $\fA = (\set{0,1}; < )$ with $L_4$ consisting of two copies of $\fA$. Then $x_1 <x_2 \AND x_3 < x_4 \AND \bigwedge_{i<j} x_i \neq x_j$ is not satisfiable in $\fA$ or $L_4$, but is satisfiable in $(L_4')$.
Even homomorphic images of equality-matching samplings, such as in Observation~\ref{thm:HomToStruct_Substruct}, need not be equality-matching.

For later use, we now present two classes of structures with equality-matching sampling computable in polynomial time.
If $\fB$ is a reduct of $\fA$ and all relations in $\fA$ have a first-order definition in $\fB$, then $\fA$ is called a \emph{first-order expansion} of $\fB$.

\begin{lemma}\label{thm:QhasSampling}
All reducts of  first-order expansions of  ${(\Q; <)}$ have an equality-matching sampling computable in polynomial time.
\end{lemma}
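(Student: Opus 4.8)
The plan is to exploit $\omega$-categoricity together with an explicit understanding of the orbits of $n$-tuples in reducts of first-order expansions of $(\Q;<)$. First I would note that $(\Q;<)$ is $\omega$-categorical, and that $\omega$-categoricity is preserved under first-order expansions (adding finitely many first-order definable relations keeps the automorphism group, hence the number of $n$-orbits, unchanged) and under reducts. Hence every structure $\fB$ in the statement is $\omega$-categorical, so by the previous Proposition it has an equality-matching sampling; what must be added is that this sampling can be taken to be of polynomial size and computable in polynomial time. Following the construction in that proof, an $n$-sample is obtained by choosing one representative $n$-tuple from each orbit and taking the induced substructure on the union of their entries; so it suffices to bound the number of orbits of $n$-tuples in $\fB$ by a polynomial in $n$ and to show these representatives (and the induced structure) can be listed in polynomial time.

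The key step is therefore the orbit count. In $(\Q;<)$ the orbit of an $n$-tuple $(a_1,\dots,a_n)$ is determined exactly by the \emph{weak linear order type} of the tuple, i.e. by the equivalence relation recording which coordinates are equal and the linear order on the equivalence classes. The number of such types on $n$ coordinates is the ordered Bell number (Fubini number), which is exponential, not polynomial. However, for a sampling we do not need one structure per orbit packed into a single $\fB_n$ — we are allowed $L_n$ to be a \emph{set} of structures. The trick is: for each ordered set partition of $\{x_1,\dots,x_n\}$ into $k$ blocks, the corresponding sample is just a chain of $k$ points in $\Q$ (with the appropriate coordinates identified), which has size $k\le n$; and since $\Aut(\fB)\supseteq\Aut(\Q;<)$-orbits refine to orbits of $\fB$ which are unions of $(\Q;<)$-orbits (a reduct has \emph{fewer}, i.e. coarser, orbits than the expansion, and a first-order expansion has the same orbits as $(\Q;<)$), every $n$-type in $\fB$ is realised among these chains. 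Concretely: let $L_n$ be the set containing, for every chain $C$ of length at most $n$ with a designated surjection from $\{x_1,\dots,x_n\}$ onto $C$, the $(\Q;<)$-reduct-structure induced on $C$ inside the relevant first-order expansion of $(\Q;<)$. Wait — one must be careful that the \emph{relations of $\fB$} on the sample are computed correctly, which is where the first-order definability over $(\Q;<)$ is used: each relation $R$ of $\fB$ is first-order definable in $(\Q;<)$, so on any finite chain its value on a tuple depends only on the order type of that tuple, and can be decided by evaluating the (fixed) defining formula — this takes time bounded by a function of $R$ and the tuple length, hence polynomial in $n$ overall.

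So the proof would proceed: (1) reduce to the case $\fB$ a first-order expansion of $(\Q;<)$, since passing to a reduct only throws relations away and preserves being an equality-matching sampling (a sampling for the bigger signature restricts to one for the smaller, and equality-matching is about adding equalities and disequalities, which is unaffected); (2) fix finitely many $(\Q;<)$-formulas defining the relations of $\fB$; (3) for each $n$, enumerate all ordered set partitions of $\{x_1,\dots,x_n\}$ — equivalently all pairs (surjection onto a linear order of size $k\le n$), realise each as a concrete finite chain of rationals, and compute all $\fB$-relations on it by evaluating the defining formulas; let $L_n$ be this finite list; (4) verify $\lvert L_n\rvert$ is polynomial: there are at most something like $n!\cdot S(n,k)$ partitions, which is super-polynomial — so in fact I would \emph{not} index by all ordered set partitions but only keep, for each chain length $k\le n$, a single chain $(1,2,\dots,k)\in\Q^k$, and rely on equality-matching to handle the identification pattern separately. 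Then $\lvert L_n\rvert=\sum_{k\le n}k=O(n^2)$, polynomial, and computation is clearly polynomial-time. (5) Check correctness and equality-matching: given $\phi\wedge\psi$ on $x_1,\dots,x_n$ satisfiable in $\fB$, a satisfying assignment identifies the variables into some $k\le n$ classes linearly ordered by $<$; mapping class $i$ to the point $i$ of the $k$-chain in $L_n$ is a satisfying assignment there, since all $\fB$-relations depend only on the order type; conversely any assignment into a sample extends to $\fB$ because the sample is an induced substructure (up to isomorphism of the relevant order type) of $\fB$, using that $\fB$ embeds such chains. The main obstacle is exactly this size bookkeeping — ensuring the sample family is polynomial rather than exponential — which is resolved precisely by allowing $L_n$ to contain several (but only polynomially many, indeed $O(n)$) chains of different lengths and letting the equality-matching property absorb the combinatorial explosion of identification patterns, rather than hard-coding one structure per orbit.
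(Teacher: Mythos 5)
Your proposal is correct and ends up at essentially the paper's construction: the paper likewise takes as $n$-sample the chain $\set{1,\dots,n}$ as an induced substructure (every relation of a reduct of a first-order expansion of $(\Q;<)$ is a union of orbits, hence determined by the order-and-equality type of the tuple, and every such type of $n$ variables is realised in the $n$-chain, which already contains your shorter chains), and invokes the same orbit argument as in the $\omega$-categoricity proposition to get equality-matching. One caution on your computability step: the relations on the sample must be the restrictions of the relations of the expansion, i.e.\ the defining formulas are to be evaluated over $(\Q;<)$ (effectively, via decidability/quantifier elimination of dense linear orders), not with quantifiers relativised to the finite chain, which would in general give a different and incorrect structure.
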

\begin{proof}
 For all $n\in \N$ let $\fB_n$ be the substructure of $(\Q;<)$ with domain $\set{1,2, \dotsc, n}$. Then $(\fB_n)$ is an equality-matching sampling for $(\Q; <)$. This is because the orbit of a tuple $t$ is fully determined by the order and identification of the coordinates of $t$ and all possible orderings and identifications of $n$ variables homomorphically map to $\fB_n$. Therefore, the constructed sampling is of the same kind as the one constructed in the proof of Proposition~3.4 and therefore equality-matching.
 
  Any relation which is first-order definable over a structure is a union of orbits of that structure. As the constructed sampling consist of substructures of $(\Q;<)$ and each $n$-sample includes all orbits of length $n$, the constructed sampling is also a sampling for reducts of first-order expansions of $(\Q; <)$.
\end{proof}

\begin{lemma}\label{thm:baseStrHaveSampling}
Let $P_1, \dots, P_m$ be a partition of $\Q$ where all parts are infinite and co-infinite. 
All reducts of  first-order expansions of
$(\Q; P_1, \dotsc, P_m)$ have an equality-matching sampling computable in polynomial time.
\end{lemma}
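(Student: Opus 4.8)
The plan is to imitate the proof of Lemma~\ref{thm:QhasSampling}, replacing the order type of a tuple by the pattern recording which part $P_j$ each coordinate belongs to. First I would record that $\fR \ceq (\Q; P_1, \dotsc, P_m)$ is $\omega$-categorical: every permutation of $\Q$ fixing each $P_j$ setwise is an automorphism, and since each $P_j$ is infinite, two $n$-tuples of $\fR$ lie in the same orbit if and only if they have the same equality pattern and, coordinatewise, lie in the same part. Hence there are only finitely many orbits of $n$-tuples, and the construction in the proof of Proposition~3.4 yields an equality-matching sampling for $\fR$; the only thing to add is a size bound.

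The key step is the choice of a single small representative substructure. For $n\in\N$ let $\fB_n$ be the substructure of $\fR$ whose domain consists of exactly $n$ elements of each part $P_j$, so $\lvert B_n\rvert = mn$. An $n$-tuple of $\fR$ uses at most $n$ distinct values, and for each colour $j$ at most $n$ of these values are needed; since $\fB_n$ supplies $n$ elements of each colour, $\fB_n$ contains a representative of every orbit of $n$-tuples of $\fR$. Thus $(\fB_n)_{n\in\N}$ is a sampling of the same kind as the one in the proof of Proposition~3.4, hence equality-matching for $\fR$, and it is polynomial since $m$ is fixed. Polynomial-time computability is immediate, as the colours of the $mn$ chosen elements are fixed by construction.

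To get the general statement, let $\fA$ be a reduct of a first-order expansion of $\fR$; then $\dom(\fA) = \Q$ and every relation of $\fA$ is first-order definable over $\fR$, hence preserved by $\Aut(\fR)$, hence a union of $\Aut(\fR)$-orbits. Let $\fC_n$ be the induced substructure of $\fA$ on the domain of $\fB_n$. Given a conjunction $\phi$ of atomic formulas of $\fA$ together with a conjunction $\psi$ of equalities and negated equalities, both on $x_1,\dotsc,x_n$: if $\phi\AND\psi$ is satisfiable in $\fC_n$, it is satisfiable in $\fA$ since $\fC_n$ is an induced substructure of $\fA$. Conversely, a satisfying assignment $h$ in $\fA$ determines an $n$-tuple $t$ over $\Q$, and there is a tuple $t'$ of $\fB_n$ in the same $\fR$-orbit, so some $\alpha\in\Aut(\fR)$ satisfies $\alpha(t) = t'$; such $\alpha$ also preserves every relation of $\fA$, so $\alpha\circ h$ is a satisfying assignment of $\phi\AND\psi$ in $\fA$ whose image lies in $\fC_n$, and hence $\phi\AND\psi$ is satisfiable in $\fC_n$. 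Therefore $(\fC_n)_{n\in\N}$ is an equality-matching sampling for $\fA$, polynomial and computable in polynomial time, since the relations of $\fC_n$ depend only on the colours and equality pattern of a tuple and $\Th(\fR)$ is decidable.

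The only delicate point is the size bound in the second paragraph, i.e.\ seeing that a single substructure with $n$ elements per part already realizes every orbit of $n$-tuples; once that is in place, the rest is a routine transfer along automorphisms, exactly parallel to Lemma~\ref{thm:QhasSampling} and the proof of Proposition~3.4.
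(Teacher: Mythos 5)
Your proposal is correct and follows essentially the same route as the paper's proof: the same sample $\fB_n$ with $n$ elements from each part (size $mn$), the same observation that orbits of $n$-tuples in $(\Q;P_1,\dotsc,P_m)$ are determined by the equality pattern and the part membership of the coordinates (so the construction of Proposition~3.4 applies and gives the equality-matching property), and the same transfer to reducts of first-order expansions via the fact that their relations are unions of $\Aut(\Q;P_1,\dotsc,P_m)$-orbits. Your write-up merely spells out in more detail the automorphism argument that the paper delegates to the proofs of Proposition~3.4 and Lemma~\ref{thm:QhasSampling}.
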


\begin{proof}
 For all $n\in \N$ define $\fB_n$ as the substructure of $(\Q; P_1, \dotsc, P_m)$ with domain $\set{a_{1,1}, \dotsc, a_{n,m}}$ where $a_{i,j} \in P_j$ for $1\leq i \leq n$ and $1 \leq j \leq m$. Then $(\fB_n)$ is an equality-matching sampling for $(\Q; P_1, \dotsc, P_m)$. This is because the orbit of an $n$-tuple $t$ in $(\Q; P_1, \dotsc, P_m)$ is fully determined by identifications of coordinates of $t$ and which coordinate belongs to which $P_i$. It is easy to see that all possible identifications and assignments of $n$ variables to $P_1, \dots P_m$  can be realized in $\fB_n$. Therefore, the constructed sampling is of the same kind as the one constructed in the proof of Proposition~3.4 and therefore equality-matching.
 
 The reason why $(\fB_n)$ is also an equality-matching sampling for reducts of first-order expansions of $(\Q; P_1, \dotsc, P_m)$ is the same as in the proof of Lemma~3.5.
\end{proof}

Another condition we need in order to construct a sampling for the union of two theories is a primitive positive version of the classical concept of \emph{no algebraicity} in model theory (see~\cite{HodgesLong}).
A formula is called \emph{primitive positive (pp)} if it is a conjunction of atomic formulas where some variables may be existentially quantified. 

\begin{definition}
A $\tau$-structure $\fA$  has \emph{no pp-algebraicity} if for any primitive positive $\tau$-formula $\phi(x)$ with parameters $a_1, \dots, a_n$ the set $\set{b \in A \mid \fA \models \phi(b)}$ is either contained in $\set{a_1, \dots, a_n}$ or infinite.
\end{definition}

It is easy to check that first-order expansions of $(\Q; <)$ or $(\Q; P_1, \dotsc, P_m)$ do not have pp-algebraicity because $a<x<b$ has either no or infinitely many satisfying assignments in $(\Q;<)$ (for all $a,b\in \Q$), and in $(\Q; P_1, \dots, P_m)$ any two distinct elements in $P_i$ are indistinguishable for first-order definable relations. When it is clear from context that we mean a set, we will use $[n]$ as a shorthand for $\set{1,2, \dots, n}$. 

We are now ready to present the first main result.

\begin{theorem}\label{thm:Both_NoAlg+InjSamp}
Let $T_1$ and $T_2$ be theories with finite relational and disjoint signatures $\tau_1, \tau_2$ respectively.
If all models of $T_1$ and $T_2$ do not have pp-algebraicity and $T_1$ and $T_2$ have equality-matching samplings $(O_n), (P_n)$ respectively, then there exists an equality-matching sampling $(L_n)$ for $T \ceq T_1 \cup T_2$ with $\lvert L_n \rvert = \lvert O_n \rvert \cdot \lvert P_n \rvert$ for all $n\in \N$. If $(O_n)$ and $(P_n)$ are computable in polyonmial time, then $(L_n)$ is also computable in polynomial time.
\end{theorem}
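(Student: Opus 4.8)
My plan is to build $(L_n)$ from $(O_n)$ and $(P_n)$ by a fibre‑product‑like construction. For $\fB\in O_n$ and $\fC\in P_n$, let $\fD_{\fB,\fC}$ be the $\tau_1\cup\tau_2$‑structure with domain $B\times C$ in which, for $R\in\tau_1$ of arity $k$, the relation $R^{\fD_{\fB,\fC}}$ holds on a tuple $((b_1,c_1),\dots,(b_k,c_k))$ iff $R^{\fB}(b_1,\dots,b_k)$ holds \emph{and} for all $i,j$ one has $b_i=b_j\Leftrightarrow c_i=c_j$; symmetrically for $R\in\tau_2$, with the roles of the two coordinates exchanged. Set $L_n\ceq\set{\fD_{\fB,\fC}\mid\fB\in O_n,\ \fC\in P_n}$. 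Then $\lvert L_n\rvert=\sum_{\fB,\fC}\lvert B\rvert\lvert C\rvert=\lvert O_n\rvert\cdot\lvert P_n\rvert$, and $(L_n)$ is computable in polynomial time once $(O_n)$ and $(P_n)$ are. It remains to show $(L_n)$ is an equality‑matching sampling for $T$: writing an arbitrary instance as $\phi_1\AND\phi_2\AND\psi$ with $\phi_i$ a conjunction of atomic $\tau_i$‑formulas without equalities and $\psi$ a conjunction of equalities and negated equalities (the $\bot$ case being trivial), I must show it is satisfiable in a model of $T$ iff it is satisfiable in some $\fD_{\fB,\fC}$.

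I would first record two facts. Every model of $T_1$ and of $T_2$ is infinite, since $x=x$ is a parameter‑free primitive positive formula and no pp‑algebraicity would otherwise fail on it; hence by the L\"owenheim--Skolem theorems any model of $T_1$ and any model of $T_2$ may be taken of a common infinite cardinality, and then by Proposition~\ref{thm:fusionIsModel} and the remark after it they have a fusion, which can be formed along any bijection between their domains. Also, for an equivalence relation $\rho$ on $\set{x_1,\dots,x_n}$ I write $\psi_\rho$ for the complete conjunction of equalities and negated equalities describing $\rho$.

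For the coverage direction, suppose $\phi_1\AND\phi_2\AND\psi$ holds in a model of $T$; by Proposition~\ref{thm:fusionIsModel} that model is a fusion of some $\fA_1\models T_1$ and $\fA_2\models T_2$, and I let $\sigma$ be the equivalence relation induced on the variables by a satisfying assignment. Then $\psi_\sigma$ implies $\psi$ and $\phi_i\AND\psi_\sigma$ holds in $\fA_i$, so by the equality‑matching property there are $\fB\in O_n$, $\fC\in P_n$ and assignments $s_1,s_2$ realising $\phi_1\AND\psi_\sigma$ in $\fB$ and $\phi_2\AND\psi_\sigma$ in $\fC$; each of $s_1,s_2$ induces exactly $\sigma$. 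The map $x\mapsto(s_1(x),s_2(x))$ satisfies $\phi_1$ in $\fD_{\fB,\fC}$: for a $\tau_1$‑atom $R(\bar x)$ of $\phi_1$ we have $R^{\fB}(s_1(\bar x))$, and because $s_1$ and $s_2$ induce the same relation $\sigma$ we get $s_1(x_i)=s_1(x_j)\Leftrightarrow s_2(x_i)=s_2(x_j)$ along the coordinates of $\bar x$, which is exactly the extra clause in the definition of $R^{\fD_{\fB,\fC}}$. Likewise it satisfies $\phi_2$ and $\psi_\sigma$, hence $\psi$.

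For the converse, suppose $h$ satisfies $\phi_1\AND\phi_2\AND\psi$ in some $\fD_{\fB,\fC}$, and let $\sigma_i$ be the equivalence relation induced by $\pi_i\circ h$. The projection $\pi_1\circ h$ satisfies $\phi_1$ in $\fB$ (the extra clause only discards tuples), so by equality‑matching $\phi_1\AND\psi_{\sigma_1}$ is satisfiable in a model $\fA_1\models T_1$, and similarly $\phi_2\AND\psi_{\sigma_2}$ in $\fA_2\models T_2$; moreover $h$ induces $\sigma\ceq\sigma_1\cap\sigma_2$, which is compatible with $\psi$, and — this is where the extra clause is used — the equality pattern of $\sigma$ agrees with that of $\sigma_i$ on the variable set of every atom of $\phi_i$. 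To finish I would reconcile both sides to the common pattern $\sigma$ via the following key lemma, the one place no pp‑algebraicity enters: \emph{if $\phi$ is a conjunction of atomic $\tau_i$‑formulas without equalities, satisfiable in a model of $T_i$ by an assignment with pattern $\rho$, then it is satisfiable in a model of $T_i$ (or an elementary extension) by an assignment with any finer pattern $\rho'\subseteq\rho$ that agrees with $\rho$ on the variable set of each atom of $\phi$} — intuitively, variables that $\rho$ glued together may be pulled apart as long as no atom is ``collapsed''. Granting the lemma, it applies on each side and yields models $\fA_i'\models T_i$ with assignments $r_i$ satisfying $\phi_i$ and inducing exactly $\sigma$; rescaling to a common cardinality and fusing $\fA_1'$ and $\fA_2'$ along the bijection $r_1(x)\mapsto r_2(x)$ then produces a model of $T$ in which $r_1$ satisfies $\phi_1\AND\phi_2\AND\psi$. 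I expect this key lemma to be the main obstacle: its proof should be a step‑by‑step relocation in which each variable is moved to a fresh element using that a pp‑definable set with parameters is either infinite or contained among its parameters, and the delicate case — where two variables to be separated occur together in an atom, so that the natural pp‑formula constraining the relocation might have its solution set concentrated on the parameters — is presumably dealt with by first relocating the entire class to a generic element, possibly after passing to a sufficiently saturated elementary extension.
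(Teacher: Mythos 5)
Your construction is exactly the paper's: the same product structures with the ``same equality pattern in both coordinates'' clause on the relations, the same forward direction via the equality-matching property, and the same endgame (equalize cardinalities by L\"owenheim--Skolem, fuse along a bijection extending $r_1(x)\mapsto r_2(x)$, invoke Proposition~\ref{thm:fusionIsModel}). The one incomplete step is the one you flag yourself: the key lemma that an assignment with pattern $\rho$ can be refined to one with any finer pattern $\rho'\subseteq\rho$ that agrees with $\rho$ on the variable set of every atom. You state this lemma correctly---it is precisely the step the paper carries out---but your sketch of its proof goes astray. The ``delicate case'' you worry about, where two variables to be separated occur together in an atom, cannot occur: it is excluded by the hypothesis of your own lemma (agreement of $\rho$ and $\rho'$ on atom variable sets), which in the product construction is exactly what the extra clause in the definition of the relations guarantees. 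Consequently no saturated elementary extension and no ``relocation of the whole class to a generic element'' is needed.

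The actual resolution is short. Take a maximal class $\{x_1,\dots,x_k\}$ of variables glued by the current assignment $g$ that $\rho'$ wants to separate. Since no atom contains two of them, $\phi$ is equivalent to $\bigwedge_{i=1}^{k}\psi_i(x_i,x_{k+1},\dots,x_n)$, where $\psi_i$ collects the conjuncts containing no $x_j$ with $j\in[k]\setminus\{i\}$. The set $\{a\in A \mid \fA\models\psi_i(a,g(x_{k+1}),\dots,g(x_n))\}$ contains $g(x_i)$, which by maximality of the class is distinct from every parameter $g(x_{k+1}),\dots,g(x_n)$; hence by no pp-algebraicity this set is infinite, and one can choose, independently for $i=1,\dots,k$, pairwise distinct fresh values avoiding the finitely many parameters. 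Iterating over the classes yields the desired assignment inside the original model. With this inserted your proof coincides with the paper's. Two minor points: the paper treats the degenerate possibility of an empty model explicitly rather than asserting that all models are infinite; and the polynomial-time computability claim deserves the one-line remark that deciding membership of a tuple in a relation of the product costs at most $n^2$ equality tests plus one lookup in each factor.
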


\begin{proof}
Let $(O_n)$ and $(P_n)$ be  equality-matching samplings of $T_1, T_2$, respectively.
Fix $n\in \N$ and let $\set{\fB_{i,1}, \dots, \fB_{i,p_i}}$ be $O_n$ for $i=1$ and $P_n$ for $i=2$.
For all $i\in [p_1], j\in [p_2]$ we now define a $\tau_1 \cup \tau_2$-structure $\fM_{i,j}$ with domain $M_{i,j}\ceq B_{1,i} \times B_{2,j}$ as follows.

For $R \in \tau_1$ of arity $k$ we define
\begin{align*}
&((a_1, b_1), \dots, (a_k, b_k)) \in R^{\fM_{i,j}} \Leftrightarrow \\
&(\forall i,j. (a_i = a_j \Leftrightarrow b_i = b_j)) \AND (a_1, \dots, a_k) \in R^{\fB_{1,i}}.
\end{align*}
Analogously, we define for $R \in \tau_2$ of arity $k$ 
\begin{align*}
&((a_1, b_1), \dots, (a_k, b_k)) \in R^{\fM_{i,j}} \Leftrightarrow \\
&(\forall i,j. (a_i = a_j \Leftrightarrow b_i = b_j)) \AND (b_1, \dots, b_k) \in R^{\fB_{2,j}}.
\end{align*}

Let $\phi_i(x_1, \dots, x_n)$ be a conjunction of atomic $\tau_i$-formulas for $i=1$ and $i=2$. Notice that forcing $\phi_1$ and $\phi_2$ to have the same variables can be done without loss of generality by introduction of dummy constraints like $x=x$.
Suppose there exists a homomorphism $g \colon D(\phi_1(x_1, \dots, x_n) \AND \phi_2(x_1, \dots, x_n)) \rightarrow \fA$ where $\fA$ is a model of $T$. Then $g\colon D(\phi_i(x_1, \dots, x_n)) \rightarrow \fA^{\tau_i}$ is also a homomorphism for $i=1$ and $i=2$. As the chosen samplings are equality-matching, there exists $u_i \in [p_i]$ and homomorphisms $h_i \colon D(\phi_i(x_1, \dots, x_n)) \rightarrow \fB_{i,u_i}$ for $i=1$ and $i=2$ such that $h_1$ and $h_2$ both identify variables in the same way that $g$ does. Then, by construction, $x_i \mapsto (h_1(x_i), h_2(x_i))$ defines a homomorphism from $D(\phi_1 \AND \phi_2)$ to $\fM_{u_1, u_2}$ identifying the same variables as $g$.

For the reverse direction suppose that $h$ is a homomomorphism from 
$D(\phi_1 \AND \phi_2)$ to $\fM_{u_1, u_2}$.
We want to show the existence of a homomorphism $g$ from 
$D(\phi_1 \AND \phi_2)$ to $\fA$, where $\fA$ is a model of $T$ and $g$ and $h$ identify the same variables. 
To simplify presentation, we would like to assume that $h$ is injective.
Suppose there exists $x_i, x_j$ such that $h(x_i) = h(x_j)$. Then we replace all occurrences of $x_j$ in $\phi_1$ and $\phi_2$ by $x_i$ and iterate until $h$ is injective on the remaining variables. Call the resulting formulas $\phi_1'$ and $\phi_2'$. If there exists an injective homomorphism $g$ from $D(\phi_1' \AND \phi_2')$ to a model of $T$, we can extend $g$ to the formerly substituted variables via $g(x_j) \ceq g(x_i)$ to get a homomorphism from $D(\phi_1 \AND \phi_2)$ to some model of $T$ which has the same identifications as $h$. Hence, it is sufficient to prove the existence of $g$ under the assumption that $h$ is injective.

Let $h_i \ceq \pi_i^2 \circ h$ for $i\in [2]$, where $\pi^2_i$ is the projection to the $i$-th coordinate out of of two coordinates. Now suppose there exists a set $S \subseteq h(D(\phi))$ of size at least two such that any two elements in $S$ are equal in the first coordinate (the case with the second coordinate can be proven analogously). Without loss of generality we will assume $S=\set{h(x_1), \dots, h(x_k)}$. Furthermore, we may assume that $S$ is maximal, i.e., for all $i>k$ we have $h_1(x_i) \neq h_1(x_1)$.
For $i\in [k]$ let $\psi_i(x_i, x_{k+1}, \dots, x_n)$ be the conjunction of all conjuncts in $\phi_1(x_1, \dots, x_n)$ which do not contain $x_j$ for all $j\in [k]\setminus\set{i}$. 
Then $\phi_1$ is equivalent to $\bigwedge_{i=1}^k \psi_i(x_i, x_{k+1}, \dots, x_n)$ because no conjunct of $\phi_1$ can contain more than one variable from $x_1, \dots, x_k$, as this conjunct would not be satisfied in the image of $h_1$ in $\fM_{u_1, u_2}$. 
As $T_1$ has an equality-matching sampling, there exists a homomorphism $g_1 \colon D(\phi_1) \rightarrow \fA_1$ for some model $\fA_1$ of $T_1$, such that $g_1$ has the same identifications as $h_1$. 
By the definition of no pp-algebraicity, the set 
 \begin{displaymath}
 \set{a\in A \mid \fA_1 \models \psi_i(a, g_1(x_{k+1}), \dots, g_1(x_n)) }
 \end{displaymath}
 is infinite for all $i\in [k]$. 
Hence, there exists a homomorphism $g_1' \colon D(\phi_1) \rightarrow \fA_1$ such that $g_1'(x_i) \neq g_1'(x_j) \neq g_1'(x_l) = g_1(x_l)$ for all $i,j \in [k]$ and $l\in [n]\setminus [k]$. 
As the sampling for $T_1$ is equality-matching, there exists $u_1'\in [p_1]$ and a homomorphism $h_1' \colon D(\phi_1) \rightarrow \fB_{1,u_1'}$ with the same identifications as $g_1'$. This yields a homomorphism $x_i \mapsto (h_1'(x_i), h_2(x_i))$ to $\fM_{u_1', u_2}$ where strictly less variables are mapped to the same row or column than by $h$. 

Iterated application of this argument to $h_1$ and, in an analogous way, to $h_2$ proves that there exist injective homomorphisms $h_1^*$ and $h_2^*$ such that $h^*$ defined by  $x_i \mapsto (h_1^*(x_i), h_2^*(x_i))$ is a homomorphism from $D(\phi_1(x_1,\dots, x_n) \AND \phi_2(x_1, \dots, x_2))$ to $\fM_{u_1^*, u_2^*}$. 
As $(O_n)$ and $(P_n)$ are equality-matching samplings there exist injective homomorphisms $g_i^* \colon D(\phi_i(x_1, \dots, x_n)) \rightarrow \fA_i^*$ where $\fA_i^*$ is a model of $T_i$ for $i=1$ and $i=2$. Now observe that $\fA_1^*$ and $\fA_2^*$ are both either empty (in which case the theorem becomes trivial) or infinite because no pp-algebraicity implies that $\set{x \mid \fA_i^* \models x=x}$ is infinite or empty. If $A_i^*$ is uncountable, then there exists a $\fA_i'$ with countable domain such that $\Th(\fA_i') = \Th(\fA_i^*)$ due to the Downward L\"owenheim-Skolem Theorem (see~\cite{HodgesLong}, page~90) and we may substitute $\fA_i^*$ by $\fA_i'$. Hence, we may assume that $\lvert A_1^* \rvert = \lvert A_2^* \rvert$ and therefore, there exists a bijection $f\colon A_1^* \rightarrow A_2^*$ such that $f(g_1^*(x_i)) = g_2^*(x_i)$ for all $i\in [n]$. However, any bijection from $A_1^*$ to $A_2^*$ induces a fusion $\fA$ of $\fA_1^*$ and $\fA_2^*$ with domain $A_1^*$ and therefore a model of $T$ by Proposition~\ref{thm:fusionIsModel}. Clearly, $g \colon D(\phi) \rightarrow \fA^*$ defined by $g(x_i) \ceq g_1^*(x_i)$ for all $i\in [n]$ is an injective homomorphism. Hence, $(L_n)$ with $L_n \ceq \set{\fM_{1,1}, \dots, \fM_{p_1, p_2}}$ is an equality-matching sampling for $T$.

We have $\vert L_n \rvert =\sum_{i\in [p_1], j\in [p_2]} \lvert \fB_{1,i} \rvert \cdot \lvert \fB_{2,j} \rvert =\lvert O_n \rvert \cdot \lvert P_n \rvert $ for all $n\in \N$ and to determine whether a relation $R^{\fM_{i,j}}$ holds on an tuple $t$ requires at most $n^2$ checks for equality and a check if $t\in R^{\fB_{1,i}}$ or $t\in R^{\fB_{2,j}}$. Therefore, if both samplings are computable in polynomial time, $(L_n)$ is computable in polynomial time as well.
\end{proof}

\begin{remark}
 Notice that with almost the same construction, an equality-matching sampling for any finite number of theories can be constructed as long as all theories have an equality-matching sampling and only models without pp-algebraicity. For that we use a higher-dimensional version of the matrix defined in the proof, where a relation $R\in \tau_i$ holds on a tuple $t$ if only if any two entries of $t$ either differ in all their coordinates or are equal in all their coordinates and the projection to the $i$-th dimension of each element of $t$ is in $R$ as defined in the respective sample for $T_i$.
\end{remark}

Note that if $\fA$ has a finite signature and is without pp-algebraicity, then all models of $\Th(\fA)$ are  without pp-algebraicity. This holds because for each primitive positive formula $\phi(x_1, \dots, x_k)$ and $n\in \N$, the theory $\Th(\fA)$ contains a sentence expressing that for any choice of parameters $x_2, \dots, x_k$ any satisfying assignment for $x_1$ is either equal to one of the parameters or there are at least $n$ different satisfying assignments for $x_1$.

If one of the theories, say $T_1$, does not satisfy the conditions of Theorem~\ref{thm:Both_NoAlg+InjSamp} because it has models with pp-algebraicity, we can prove the following.

\begin{proposition}\label{thm:InjSamp+ECL}
 Let $T_1$ be a theory with finite relational signature and let $\fA_2$ be an infinite structure with signature $\tau_2$ whose relations are first-order definable over the empty signature. If $T_1$ has an equality-matching sampling $(L_n)$ and only infinite models, then the expansion of the samples in $(L_n)$ with relations in $\tau_2$ by their first-order definitions is an equality-matching sampling for $T_1 \cup \Th(\fA_2)$.
\end{proposition}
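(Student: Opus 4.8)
The plan is to fix $n$ and to take a conjunction $\phi$ of atomic $(\tau_1\cup\tau_2)$-formulas together with a conjunction $\psi$ of equalities and negated equalities on $x_1,\dots,x_n$, split $\phi$ into its $\tau_1$-part $\phi_1$ and its $\tau_2$-part $\phi_2$ (absorbing the equality atoms of $\phi$ into $\psi$, and disposing at once of the trivial case where $\bot$ occurs in $\phi$), and move satisfiability between a model of $T\ceq T_1\cup\Th(\fA_2)$ and an expanded sample by passing through a \emph{complete} equality type. First I would set up two preliminaries. Since $\fA_2$ is infinite, every first-order $\emptyset$-formula is, over infinite domains, equivalent to a Boolean combination of equalities; hence each $R\in\tau_2$ has a quantifier-free definition $\delta_R$ over the empty signature, and it is these $\delta_R$ that must be used to form $L'_n$: a definition containing quantifiers can have a different truth value in a finite sample than in $\fA_2$. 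With $\delta_R$ quantifier-free, whether a tuple $\bar a$ over a set $C$ satisfies $\delta_R$ depends only on the equality type of $\bar a$, uniformly in $C$ and compatibly with $\fA_2$; and if $C$ is infinite, the $\tau_2$-structure $\fC$ on $C$ interpreting each $R$ by $\delta_R$ is a model of $\Th(\fA_2)$, because replacing each $R$ by $\delta_R$ sends a $\tau_2$-sentence $\sigma$ to an $\emptyset$-sentence $\sigma^*$ with $\fC\models\sigma\Leftrightarrow(C;=)\models\sigma^*\Leftrightarrow(A_2;=)\models\sigma^*\Leftrightarrow\fA_2\models\sigma$, the middle equivalence holding because the theory of an infinite set is complete. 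Finally, $\Th(\fA_2)$ contains $\forall\bar x\,(R(\bar x)\leftrightarrow\delta_R(\bar x))$, so in every model of $T$ the interpretation of $R$ is exactly the $\delta_R$-interpretation.

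I would then handle the two required implications by one symmetric argument. Given a satisfying assignment $s$ for $\phi\wedge\psi$ in either a model $\fA$ of $T$ or an expanded sample $\fB'\in L'_n$, let $\theta$ be the complete equality type on $x_1,\dots,x_n$ realised by $s$, so $\theta\vdash\psi$. Restricting to $\tau_1$, the map $s$ shows that $\phi_1\wedge\theta$ is satisfiable in a model of $T_1$ in the first case (in $\fA^{\tau_1}$, a model of $T_1$ by Proposition~\ref{thm:fusionIsModel}) and in the sample $\fB\in L_n$ in the second. Since $(L_n)$ is an equality-matching sampling for $T_1$, I would then obtain an assignment $t$ satisfying $\phi_1\wedge\theta$ in the \emph{other} kind of object — a model $\fA_1$ of $T_1$, respectively a sample in $L_n$ — with $t$ realising exactly $\theta$ because $\theta$ is complete. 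When the target is a model $\fA_1$ of $T_1$, I would invoke the hypothesis that $T_1$ has only infinite models, so that $A_1$ is infinite, and expand $\fA_1$ by the $\delta_R$'s to a $\tau_1\cup\tau_2$-structure whose $\tau_1$-reduct is $\fA_1$ and whose $\tau_2$-reduct models $\Th(\fA_2)$ by the preliminary — hence a model of $T$. In either direction it then remains to check that $t$ satisfies $\phi\wedge\psi$ in the target: it satisfies $\phi_1$ since it was obtained satisfying $\phi_1\wedge\theta$, and $\psi$ because $t$ realises $\theta$ and $\theta\vdash\psi$; and for each conjunct $R(\bar x)$ of $\phi_2$, using that $R$ is interpreted by $\delta_R$ in both the source and the target object and that $t(\bar x)$ and $s(\bar x)$ have the same equality type, the fact that $s(\bar x)$ lies in $R$ in the source forces $t(\bar x)$ to lie in $R$ in the target. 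This proves both implications, so $(L'_n)$ is an equality-matching sampling for $T$; moreover $\lvert L'_n\rvert=\lvert L_n\rvert$ since the domains are unchanged, and the expansion is computable in polynomial time whenever $(L_n)$ is, each $\delta_R$ being fixed.

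I expect the only genuine obstacle to be the first preliminary. One cannot simply substitute an arbitrary first-order definition of a $\tau_2$-relation into a finite sample, since its truth value there may differ from its value in $\fA_2$ — for instance $\forall z\,(z=x)$ defines the empty unary relation in any infinite set but the full relation in a one-element sample — so one first has to pass to quantifier-free definitions, which exist precisely because $\fA_2$ is infinite. After that, everything reduces to bookkeeping with equality types, and that is exactly what the equality-matching hypothesis (together with the hypothesis that $T_1$ has only infinite models, which supplies the infinite domain needed to build the fusion) is tailored to support.
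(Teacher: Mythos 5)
Your proof is correct and follows essentially the same route as the paper: atomic $\tau_2$-constraints only express equality patterns, so satisfiability transfers through the equality-matching property of $(L_n)$, with an infinite model of $T_1$ expanded by the $\emptyset$-definitions to yield a model of $T_1\cup\Th(\fA_2)$. Your observation that the $\tau_2$-relations must be added to the finite samples via quantifier-free definitions (equivalent over infinite sets to the given ones) rather than by arbitrary first-order definitions is a worthwhile refinement: the paper's decomposition of $\phi_2$ into a disjunction of equality types silently relies on exactly this point, and a literal reading of the statement (e.g.\ expanding a one-element sample by the definition $\forall z\,(z=x)$) would otherwise fail.
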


\begin{proof}
Fix $n \in \N$ and let $\tau_1$ be the signature of $T_1$. Call the expanded sampling $(L_n')$.
Let $\phi_i$ be a conjunction of atomic $\tau_i$ formulas.
Then $\phi_2(x_1, \dots, x_n)$ is equivalent to $\bigvee_{i\in [k]} \psi_i(x_1, \dots, x_n)$, where each $\psi_i$ specifies by a conjunction of equalities and disequalities which variables must be equal and which must differ . However, for every $i\in [k]$ the formula $\phi_1 \AND \psi_i$ is satisfiable in some element of $L_n$ if and only if it is satisfiable in some model of $T_1$ because $(L_n)$ is equality-matching. Therefore, $\phi_1 \AND \phi_2$ is satisfiable in some model of $T_1$ if and only if it is satisfiable in some structure in $L_n'$.
\end{proof}

There are also theories with a 
sampling 
but without an equality-matching sampling: 
for example, $\fA \ceq (\Z; R_a,R_m)$ where
$R_a := \{(x,y,z) \mid x+y=z\}$ and $R_m := \{(x,y,z) \mid x\cdot y = z\}$ has a trivial CSP and therefore a sampling. If there exists an equality-matching sampling for $\Th(\fA)$ we can construct a sampling for $T \ceq \Th(\fA) \cup \Th(\Z; \neq)$ by  Proposition~\ref{thm:InjSamp+ECL}. Then we can define $z=1$ by the primitive positive formula $\exists x,y \, (x\cdot z = x \AND y\cdot z = y \AND x\neq y)$. 
It is now easy to see that Hilbert's tenth problem, which is undecidable, is many-to-one reducible to $\CSP(T)$, contradicting Proposition~\ref{thm:decidableEqSampling}.

The following example shows that we cannot drop the assumption of no pp-algebraicity in Theorem~\ref{thm:Both_NoAlg+InjSamp}.

\begin{example}\label{ex:Succ2Col}
 Let $\succ$ be the successor relation on $\N$. Let $P_0, P_1$ be two disjoint, infinite sets (two colours). Then $T_1 \ceq \Th(\N; \succ)$ and $T_2 \ceq \Th(P_0 \cupdot P_1; P_0, P_1)$ both have an equality-matching sampling which can be computed in polynomial time. However, any sampling $(L_n)_{n\in \N}$ of $T_1 \cup T_2$ has $\lvert L_n \rvert \geq 2^n$ for all $n\in \N$, and therefore no polynomial sampling exists.

 To prove this, first notice that we can construct a model $\fA$ for $T \ceq T_1 \cup T_2$ such that $\CSP(\fA) = \CSP(T)$. This can be done by giving each natural number the colour $P_0$ or $P_1$ in such a way that the resulting sequence over $\set{0,1}$ includes any binary number as consecutive subsequence. 
  Therefore, when examining lower bounds for $\lvert L_n \rvert$, we may assume by Proposition~\ref{thm:samplesOfStructures} that for each $n$ there is only one element  $\fB_n \in S_n(T)$ in $L_n$. Now observe the following:
 \begin{enumerate}
  \item In any $n$-sample $\fB_n$, no element $x$ can satisfy $P_0(x) \AND P_1(x)$ and there is no pp-formula which can force a node to have no colour. Hence, if some node in $\fB_n$ has no colour, we can colour this node arbitrarily and the result is again in $S_n(T)$. Therefore, without loss of generality, each node in the sampling has exactly one colour.
  \item When we fix a node $c$ in $\fB_n$ such that  $\phi_{c} \ceq \succ(c, x_2) \AND \dots \AND \succ(x_{n-1}, x_n)$ is satisfiable in $\fB_n$, then there exists exactly one map $s\colon [n] \rightarrow \set{0,1}$ such that $\psi_{c,s} \ceq \phi_{c} \AND \bigwedge_{i\in [n]} P_{s(i)}(x_{i})$ is satisfiable in $\fB_n$. If there were two distinct maps $s$ and $s'$, then $\psi_{x_1,s} \AND \psi_{x_1,s'}$ would be satisfiable in $\fB_n$, but not in $\fA$.\label{item:uniqueSequence}
 \end{enumerate}
Now observe that atomic $\tau_1 \cup \tau_2$-formulas with $n$ variables can encode a binary number $b$ of length $n$ via $\succ(x_1, x_2) \AND \dots \AND \succ(x_{n-1}, x_n) \AND \bigwedge_{i\in [n]} P_{b_i}(x_i)$. By item~\eqref{item:uniqueSequence} each element of $\fB_n$ is the start of at most one binary number and therefore, $\fB_n$ must have at least $2^n$ elements.

We would like to remark that for all $n$ there exists $\fB_n \in S_n(T)$ of size $2^n$ such that $(\fB_n)$ is an equality-matching sampling of $T$. It can be constructed via a de-Brujin sequence. However, the smallest $n$-universal substructure of a model of $T$ has size $2^n+n-1$, which is larger.
\end{example}

Even though we cannot drop no-pp-algebraicity in general, there are structures with pp-algebraicity where the sets definable by primitive positive formulas are somewhat tame and an equality-matching sampling of the union of their theories exists and is computable in polynomial time. We will demonstrate this with the following example, which will also occur in the next section.

\begin{example}\label{ex:EEMaj}
 Let $E_1$ be the relation $$\set{(2a,2a+1), (2a+1, 2a) \mid a\in \N}.$$ Let $E_2$ be a copy of $E_1$. Then we consider $T \ceq \Th(\N; E_1) \cup \Th(\N; E_2)$. For $n\in \N$ we will now construct $\fB_n \in S_n(T)$ such that $(\fB_n)_{n\in \N}$ is an equality-matching sampling for $T$ computable in polynomial time.

 Consider formulas $\gamma_k$ and $\delta_k$ describing an \emph{alternating sequence} and an \emph{alternating cycle}, both of length $2k$, respectively.
 Formally, we define $ \gamma_k(x_1, \dots, x_{2k+1})$ as the formula $\bigwedge_{i \in [k]} (E_1(x_{2i-1}, x_{2i}) \AND E_2(x_{2i}, x_{2i+1}))$ and $\delta_k(x_1, \dots, x_{2k})$ as the formula $\gamma_k(x_1, \dots, x_{2k}, x_1)$.

 Let $\fB_n$ be the structure containing $\lceil\frac{n}{2k}\rceil$ many copies of $D(\delta_k(x_1,x_2, \dots, x_{2k}))$, for each $k\in \set{1, \dots, \lceil \frac{n}{2} \rceil}$. Such $\fB_n$ has size in $O(n^2)$.

 We now prove that $(\fB_n)_{n\in \N}$ is a sampling for $T$. To this end, note that $\fB_n$ can be extended to a model of $T$ by adding infinitely many alternating cycles of all positive even lengths. 
 We will now analyze which instances $\phi$ of $\CSP(T)$ are satisfiable and prove that these are satisfiable in $\fB_n$.
 Let $\phi_i$ be a conjunction of atomic $\tau_i$-formulas for $i=1$ and $i=2$ and suppose that $\phi\ceq \phi_1 \AND \phi_2$ contains only variables from $\set{x_1, \dots, x_n}$ and is satisfiable in some model of $T$. 
 If $\phi$ contains conjuncts of the form $E_i(x,y) \AND E_i(y,z)$ then $x=z$ must hold in any model of $T$. Hence, we may simplify our analysis by replacing $z$ by $x$ in $\phi$. The instance $E_i(x,x)$ is never satisfiable. Therefore, if $\phi$ contains the conjunct $E_i(x,y)$ then $x$ and $y$ do not appear in any other $E_i$ conjunct, i.e., each variable appears at most once in all $E_1$ conjuncts and at most once in all $E_2$ conjuncts. Hence, $\phi$ can only describe alternating sequences and alternating cycles, both of which homomorphically map into alternating cycles of suitable length and therefore to $\fB_n$.
\end{example}

\section{Exemplary Application to CSPs}\label{sec:application}
We finally show how to use the sampling $(L_n)$ constructed in Theorem~\ref{thm:Both_NoAlg+InjSamp} in order to prove the polynomial-time tractability of $\CSP(T_1 \cup T_2)$. 
To do this, we use algorithms for the CSPs of the structures $\fB_n \in L_n$ which run in \emph{uniform} polynomial time, i.e., the runtime is polynomial in the size of the instance and in $\lvert B_n \rvert$.
The following generalises a result for CSPs of structures by Bodirsky, Macpherson, and Thapper
~\cite{BodirskyMacphersonThapper13}
to CSPs of theories. 

\begin{proposition}\label{prop:sampling}
  Let $T$ be a theory with finite relational signature and let $(L_n)$ be a sampling of $T$ computable in polynomial time. If there is an algorithm that solves $\Csp(\fB_n)$ for every $\fB_n \in L_n$ in uniform polynomial time, then $\Csp(T)$ is polynomial-time tractable. 
\end{proposition}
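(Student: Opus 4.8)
The plan is the straightforward one: given an instance $\phi$ of $\CSP(T)$, let $n$ be the number of variables occurring in $\phi$, use the polynomial-time algorithm from the hypothesis to compute the finite set $L_n$, and then, for each $\fB \in L_n$, test whether $\phi$ is satisfiable in $\fB$ by running the uniform polynomial-time algorithm for the CSPs of the samples. We accept if and only if at least one of these tests succeeds. Correctness is immediate from the definition of a sampling: since $\phi$ has at most $n$ variables, $\phi$ is satisfiable in some model of $T$ if and only if it is satisfiable in some $\fB \in L_n$.

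For the running-time analysis, let $N$ denote the size of the input $\phi$, so that $n \le N$. First I would note that, since the algorithm computing $L_n$ runs in time polynomial in $n$, its output necessarily has size polynomial in $n$, hence polynomial in $N$; in particular there is a fixed polynomial $p$ with $\lvert L_n \rvert \le p(n)$, so the number of structures in $L_n$ is at most $p(n)$ and each $\fB \in L_n$ satisfies $\lvert B \rvert \le p(n)$. Computing $L_n$ thus takes time polynomial in $N$. Then, for each of the at most $p(n)$ structures $\fB \in L_n$, we invoke the uniform algorithm on input $(\fB, \phi)$; by assumption this runs in time bounded by a polynomial in $\lvert \phi \rvert + \lvert B \rvert \le N + p(n)$, which is polynomial in $N$. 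Summing over the at most $p(n)$ structures keeps the total polynomial in $N$, and the final disjunction of the outcomes costs nothing.

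I do not expect a genuine obstacle here; the one point that deserves care is the bookkeeping around uniformity — we must use that there is a single algorithm solving $\CSP(\fB)$ for all $\fB \in L_n$ simultaneously, with runtime polynomial in both the instance and $\lvert B \rvert$, rather than a separate (a priori non-uniformly bounded) algorithm for each individual $\fB$. This is precisely why the statement is phrased with \emph{uniform} polynomial time, and the argument mirrors the corresponding one for CSPs of structures in~\cite{BodirskyMacphersonThapper13}; passing from structures to theories costs nothing beyond replacing the single universal structure by the finite family $L_n$ and taking a disjunction over its members.
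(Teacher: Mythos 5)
Your proposal is correct and follows essentially the same route as the paper's proof: compute $L_n$, run the uniform algorithm on $(\fB,\phi)$ for each $\fB\in L_n$, accept iff some run accepts, with correctness immediate from the definition of sampling and the runtime bounded because $\lvert L_n\rvert$ is polynomial in $n$ and the uniform algorithm is polynomial in the instance size and $\lvert B\rvert$. Your extra bookkeeping (that polynomial-time computability of $L_n$ bounds its size, and the emphasis on uniformity) only makes explicit what the paper leaves implicit.
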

\begin{proof}
 To decide an instance $\phi$ of $\CSP(T)$  on $n$ variables we run the algorithm on $(\fB, \phi)$ for all $\fB  \in L_n$. If the algorithm accepts on any of these tuples, we return \enquote{satisfiable}, otherwise we return \enquote{unsatisfiable}. This is correct and complete by the definition of sampling. Furthermore, the procedure runs in polynomial time, because $|L_n|$ is polynomial in $n$ and the algorithms worst-case runtime is polynomial in $n$ and $|L_n|$ and therefore polynomial in $n$.
 \end{proof}

To describe classes of finite structures whose CSP can be be solved by a uniform polynomial-time algorithm, the following concepts from universal algebra are important. 

\begin{definition}
 Let $\fA$ be a relational structure with signature $\tau$ and let $f$ be an operation of arity $k$ on $\fA$. 
 We call $f$ a \emph{polymorphism} of $\fA$ if it is a homomorphism from $\fA^k$ to $\fA$, i.e., whenever $t_1, \dots, t_k \in R^{\fA}$, for some $R\in \tau$, then $f(t_1, \dots, t_k) \in R^{\fA}$, where $f$ is applied componentwise.
   An operation $f \colon A^k \rightarrow A$ is \emph{totally symmetric} if for all $x_1, \dotsc, x_k, y_1, \dotsc, y_k\in A$ we have
     $f(x_1,\dotsc, x_k) = f(y_1, \dotsc, y_k)$ whenever $\set{x_1, \dotsc, x_k} = \set{y_1, \dotsc, y_k}$.
      If $k \geq 3$ and for all $a,b\in A$ we have $f(a,b,b,\dots, b) = f(b,a,b,b,\dots, b) = \dots = f(b,b,\dots, b,a) = b$, then $f$ is called a \emph{near-unanimity operation}. 
\end{definition}

\begin{proposition}[Corollary~3.6 in~\cite{JeavonsCohenCooper98}]\label{thm:nearUnanimity_tractable}
There is an algorithm that solves $\CSP(\fB)$
in uniform polynomial time for all finite relational structures $\fB$ with a near-unanimity polymorphism.
\end{proposition}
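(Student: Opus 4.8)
The plan is to solve $\Csp(\fB)$ by the classical \emph{local-consistency} (constraint-propagation) algorithm and to use the near-unanimity polymorphism to show that this algorithm is complete, not merely sound. Fix a near-unanimity polymorphism $f$ of $\fB$, of arity $k$; since $\fB$ is fixed, $k$ is a constant, and by the Baker--Pixley theorem (applicable since every relation of $\fB$ is preserved by $f$) I may replace each relation of $\fB$ by the conjunction of its projections onto $k-1$ of its coordinates, so that without loss of generality every relation symbol has arity at most $k-1$. Put $m \ceq \lvert B\rvert$. On an instance $\phi$ with variables $x_1,\dots,x_n$ the algorithm maintains, for every set $S$ of at most $k$ variables, a list $\mathcal{L}_S$ of partial assignments $S\to B$: initially $\mathcal{L}_S$ is the set of assignments satisfying every conjunct of $\phi$ all of whose variables lie in $S$, and then one repeatedly removes, for all $S\sse T$ with $\lvert T\rvert\le k$, every $s\in\mathcal{L}_S$ that does not extend to some element of $\mathcal{L}_T$ and every $t\in\mathcal{L}_T$ whose restriction to $S$ lies outside $\mathcal{L}_S$, until a fixed point is reached (this \emph{establishes strong $k$-consistency}). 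If some list is emptied, reject; otherwise build an assignment greedily, processing $x_1,x_2,\dots$ in turn and at each step choosing a value that keeps the partial assignment inside every list, and accept. There are $O(n^k)$ lists, each of size at most $m^k$, and each round of propagation deletes at least one entry and costs a polynomial in $n$ and $m$; hence the whole computation runs in time polynomial in $n$ and $m$ with the exponent depending only on $k$ and the signature. This is the required uniform polynomial-time bound, and it is one and the same algorithm, with one common polynomial bound, for the whole family of structures $\fB_n$ occurring in an application (there they all carry near-unanimity polymorphisms of a fixed arity over a fixed signature).

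Soundness is immediate: a solution of $\phi$ restricts into every list and therefore survives propagation, so a rejection correctly certifies $\phi\notin\Csp(\fB)$. The content is \textbf{completeness}: if no list is emptied, then $\phi$ is satisfiable. The first ingredient is that every maintained relation $\mathcal{L}_S$ is preserved by $f$; this holds initially, since $\mathcal{L}_S$ is an intersection of relations of $\fB$ (composed with coordinate projections where variables are repeated), all preserved by $f$, and it is inherited under the projections and intersections performed during propagation. At the fixed point one thus has, for each $\lvert S\rvert\le k$, a nonempty $f$-closed list, and the lists agree in the strong-consistency sense. Completeness then amounts to the classical \emph{strict width $k-1$} property of structures with a $k$-ary near-unanimity polymorphism (Feder--Vardi): under these hypotheses the greedy construction never gets stuck. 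I would prove this in the form: if $\phi$ is strong-$k$-consistent with all lists nonempty, then pinning any variable $x_i$ to any value occurring in $\mathcal{L}_{\{x_i\}}$ and re-establishing strong $k$-consistency again leaves all lists nonempty; iterating this $n$ times pins all variables without ever emptying a list, and the resulting total assignment satisfies every constraint (each has arity at most $k-1$ and is certified by some nonempty list), hence is a solution. The no-wipeout step is where the near-unanimity polymorphism really enters: if re-propagation after pinning $x_i:=b$ emptied some list, one traces back a minimal chain of forced deletions, picks $k$ partial assignments each realising all but one link of this chain (available by strong $k$-consistency and the choice $b\in\mathcal{L}_{\{x_i\}}$), and applies $f$ to them coordinatewise; the near-unanimity identities make the result agree with the pinned value and with the given partial assignment everywhere it is defined, while $f$-closedness of the lists places the combined tuple back into the list that was supposedly emptied --- a contradiction. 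Equivalently, one may package this as the Baker--Pixley statement that the $(k-1)$-decomposable relation cut out by the fixed-point lists $\{\mathcal{L}_S : \lvert S\rvert\le k-1\}$ coincides with the set $\mathrm{Sol}(\phi)$ of satisfying assignments, which is then nonempty.

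I expect the crux to be exactly this no-wipeout step, i.e.\ getting the coordinatewise application of $f$ to land back inside the critical list: a straightforward count shows that only \emph{some} of the $k$ combined assignments are automatically compatible with a given list, so one must choose the $k$ assignments carefully (guided by minimal consistency-violating configurations), and here both that exactly $k$ assignments are combined and that consistency was established at level $k$ (and not lower) are used essentially --- a Helly-type property of $f$-closed relations, equivalently the Baker--Pixley theorem, being the real engine. The remaining ingredients --- the running-time analysis, soundness, the reduction to arity at most $k-1$, and the greedy read-off of an explicit solution once no-wipeout is available --- are routine.
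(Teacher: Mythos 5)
First, a point of reference: the paper does not prove this proposition at all --- it is imported verbatim as Corollary~3.6 of Jeavons, Cohen and Cooper, so there is no in-paper argument to compare with, and your proposal is in effect an attempt to reprove the cited result. The route you choose (enforce strong $k$-consistency for a $k$-ary near-unanimity polymorphism $f$, reduce to arity $\leq k-1$ via Baker--Pixley, observe that the propagated lists stay closed under $f$, then read off a solution greedily) is exactly the route of the cited source, and your soundness argument, running-time analysis, and the uniformity caveat (the exponent depends on the arity $k$ of the near-unanimity operation, which is constant in the paper's intended applications, e.g.\ the ternary operation used for Example~\ref{ex:EEMaj}) are all handled correctly.

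The genuine gap is in the one step that carries all the content: that nonempty strongly $k$-consistent, $f$-closed lists guarantee a solution (your no-wipeout step; equivalently, strict width $k-1$). The mechanism you sketch --- take $k$ partial assignments each realising all but one link of a minimal violated configuration, apply $f$ coordinatewise, and invoke $f$-closedness of the critical list --- does not go through as stated: closure under $f$ only yields $f(t_1,\dots,t_k)\in R$ when \emph{all} of $t_1,\dots,t_k$ lie in $R$, whereas by construction each of your $k$ witnesses misses exactly one of the relevant lists, so for that list one argument is not known to belong to it and the near-unanimity identities do not repair this (they only help when the offending arguments literally repeat a single value). The underlying \emph{Helly-type property of $f$-closed sets} that this pattern would need is in fact false: for any majority operation ($k=3$) the two-element sets $\set{1,2}$, $\set{1,3}$, $\set{2,3}$ are each closed under $f$, pairwise intersecting, yet have empty total intersection. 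The correct proof (Feder--Vardi; Jeavons--Cohen--Cooper) must therefore exploit the full relational consistency structure of the lists, not just closure of the individual candidate-value sets, via a more delicate induction that lifts strong $k$-consistency to $s$-consistency for all $s$; this is precisely the non-trivial content of the corollary being cited. As written, your argument either has to be replaced by that proof or it simply defers to the citation --- which is what the paper itself does, stating the result without proof.
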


Results about the applicability of fast uniform algorithms for CSPs of finite relational structures with a near-unanimity polymorphism can be found in the work of Kozik (2016).

Proposition~\ref{thm:nearUnanimity_tractable} and Proposition~\ref{prop:sampling} can be applied to Example~\ref{ex:EEMaj} with the following near-unanimity operation $f$ defined on $\fB_n$ for $n\in \N$:
 \begin{displaymath}
    f(x,y,z) \ceq \left\{ \begin{array}{ll}
                     y         & \text{if } y = z,\\
                     x     & \text{otherwise.}
                     \end{array} \right.
  \end{displaymath}
Note that $f$ is a polymorphism of $\fB_n$, for all $n$, because if $(t_{1,1}, t_{1,2})$, $(t_{2,1}, t_{2,2})$ and $(t_{3,1}, t_{3,2})$ are in $E_k$ and $t_{i,1} = t_{j,1}$, then $t_{i,2} = t_{j,2}$ and therefore $f(t_1, t_2, t_3)= (t_{i,1}, t_{i,2}) \in E_k$ for all $k\in [2], i,j \in [3], i\neq j$. 
In particular, $\CSP(T)$ with $T$ from Example~\ref{ex:EEMaj} can be solved in polynomial time.

 An example for a totally symmetric polymorphism is the minimum operation over $(\Q;<)$. Also, we can define minimum as an operation of any arity. 
The following result builds on results of Feder and Vardi~\cite{FederVardi}.

\begin{theorem}[Section~3 in~\cite{DalmauPearson}]
\label{thm:finiteStructureSolvedByAC}
 Let $\fB$ be a finite structure with finite relational signature. Then the arc-consistency procedure solves $\CSP(\fB)$ iff $\fB$ has totally symmetric polymorphisms of all arities.
\end{theorem}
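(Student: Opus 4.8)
The plan is to recast each side of the biconditional as the existence of a single homomorphism and connect the two via one auxiliary structure. For a finite structure $\fB$ with signature $\tau$, let $\mathcal P(\fB)$ denote its \emph{power structure}: the domain is the set of nonempty subsets of $B$, and for $R\in\tau$ of arity $k$ we put $(S_1,\dots,S_k)\in R^{\mathcal P(\fB)}$ exactly when for every $j\in[k]$ and every $a\in S_j$ there is some $t\in R^{\fB}$ with $t_j=a$ and $t_l\in S_l$ for all $l$. Since $\fB$ is finite, $\mathcal P(\fB)$ is finite, and $a\mapsto\{a\}$ embeds $\fB$ into $\mathcal P(\fB)$, so $\fB\to\mathcal P(\fB)$ always holds.

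First I would establish the folklore characterisation of arc consistency: the arc-consistency procedure (which maintains a list $L(v)\subseteq B$ for each variable $v$ and iteratively deletes unsupported values) accepts an instance $\phi$ if and only if $D(\phi)\to\mathcal P(\fB)$. The key observations are that a list assignment $L$ is arc-consistent precisely when it is a homomorphism $D(\phi)\to\mathcal P(\fB)$; that arc-consistent list assignments are closed under pointwise union, so there is a unique maximal one $L^{\ast}$; and that the procedure, started from the all-$B$ assignment and only ever removing unsupported values, converges to exactly $L^{\ast}$. Hence the procedure accepts iff $L^{\ast}$ is everywhere nonempty iff some homomorphism $D(\phi)\to\mathcal P(\fB)$ exists. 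Because $D(\phi)\to\fB$ always entails $D(\phi)\to\mathcal P(\fB)$, the statement \enquote{the arc-consistency procedure solves $\CSP(\fB)$} is equivalent to \enquote{$D(\phi)\to\mathcal P(\fB)$ implies $D(\phi)\to\fB$ for every $\phi$}, and applying this to an instance whose canonical database is $\mathcal P(\fB)$ itself (witnessed by the identity) shows it is in turn equivalent to the single assertion $\mathcal P(\fB)\to\fB$.

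It then remains to prove that $\fB$ has totally symmetric polymorphisms of all arities iff $\mathcal P(\fB)\to\fB$. For the direction ($\Leftarrow$), from a homomorphism $g\colon\mathcal P(\fB)\to\fB$ I would set $f_m(a_1,\dots,a_m)\ceq g(\{a_1,\dots,a_m\})$: total symmetry is immediate, and $f_m$ is a polymorphism because for $t_1,\dots,t_m\in R^{\fB}$ the tuple of column-sets $\bigl(\{(t_1)_1,\dots,(t_m)_1\},\dots,\{(t_1)_k,\dots,(t_m)_k\}\bigr)$ lies in $R^{\mathcal P(\fB)}$ (each $t_i$ supports its own $i$-th contribution in every column), so applying $g$ coordinatewise stays in $R^{\fB}$. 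For the direction ($\Rightarrow$), I would fix a single arity $N$ larger than $|B|$ and than $|B|\cdot\max_{R\in\tau}\ar(R)$, take a totally symmetric polymorphism $f_N$ of arity $N$, and define $g(S)\ceq f_N(s_1,\dots,s_N)$ for any enumeration $s_1,\dots,s_N$ of $S$ with repetitions; this is well defined by total symmetry, since the output depends only on $\{s_1,\dots,s_N\}=S$. To check $g$ is a homomorphism $\mathcal P(\fB)\to\fB$, given $(S_1,\dots,S_k)\in R^{\mathcal P(\fB)}$ I would collect the at most $N$ support witnesses $t\in R^{\fB}$ provided by the definition, pad them to a length-$N$ list by repeating one of them, and apply $f_N$; the result lies in $R^{\fB}$, and in column $j$ the $N$ entries form exactly the set $S_j$, so by total symmetry that coordinate equals $g(S_j)$.

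The step that needs the most care is the total-symmetry bookkeeping in ($\Rightarrow$): nothing forces the totally symmetric polymorphisms of distinct arities to be mutually coherent, so one must never equate $f_N$ with $f_{N'}$ for $N\ne N'$, which is exactly why it matters to fix one sufficiently large $N$ at the outset and perform every column computation at that single arity. Everything else --- the power-structure description of the arc-consistency procedure, and unwinding the correspondence between homomorphisms $\mathcal P(\fB)\to\fB$ and families of totally symmetric polymorphisms --- is routine once the definitions are spelled out.
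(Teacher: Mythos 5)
Your proof is correct; note that the paper itself gives no proof of this statement but imports it from Dalmau and Pearson~\cite{DalmauPearson}, and your argument is essentially a self-contained reconstruction of that source's proof: your homomorphism $\mathcal{P}(\fB)\to\fB$ from the power structure is exactly their notion of a \emph{set function}, your characterisation of acceptance by arc consistency as $D(\phi)\to\mathcal{P}(\fB)$ is their width-1 characterisation, and the padding trick with one sufficiently large arity $N$ is the standard bookkeeping linking set functions to totally symmetric polymorphisms of all arities. No gaps.
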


The Arc-consistency algorithm runs in worst-case time $O(n^2m^2)$ where $n$ is the number of variables in the instance and $m$ is the size of the domain~\cite{Cooper89_AnOptimalKConsistencyAlgorithm}. 
Theorem~\ref{thm:finiteStructureSolvedByAC} 
and Proposition~\ref{prop:sampling} immediately yield the following. 

\begin{corollary}\label{thm:samplesWithTS}
 Let $(L_n)$ be a sampling for $T$ which is computable in polynomial time. If all samples in $(L_n)$ have totally symmetric polymorphism of all arities, then $\CSP(T)$ is polynomial-time tractable.      
\end{corollary}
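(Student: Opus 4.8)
The plan is to combine Theorem~\ref{thm:finiteStructureSolvedByAC} with Proposition~\ref{prop:sampling}; the only point requiring a moment's care is that the arc-consistency procedure really provides a \emph{uniform} polynomial-time algorithm in the sense demanded by Proposition~\ref{prop:sampling}.

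First I would observe that, by hypothesis, every sample $\fB_n \in L_n$ has totally symmetric polymorphisms of all arities, so Theorem~\ref{thm:finiteStructureSolvedByAC} applies to each of them: the arc-consistency procedure decides $\CSP(\fB_n)$. It is worth stressing that arc-consistency is a single generic algorithm taking a finite relational structure together with a conjunction of atomic formulas as input; only its \emph{correctness} on a particular structure depends on the polymorphisms of that structure, and the hypothesis secures this correctness simultaneously for all $\fB_n \in L_n$. Hence there is one algorithm that solves $\Csp(\fB_n)$ for every $\fB_n \in L_n$, matching the precise wording of Proposition~\ref{prop:sampling}.

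Next I would invoke the runtime bound $O(n^2 m^2)$ for arc-consistency recalled above, with $n$ the number of variables of the instance and $m$ the size of the domain~\cite{Cooper89_AnOptimalKConsistencyAlgorithm}: since this is polynomial in the instance size and in $\lvert B_n \rvert$, arc-consistency runs in uniform polynomial time. Thus both hypotheses of Proposition~\ref{prop:sampling} hold — $(L_n)$ is computable in polynomial time, and there is a uniform polynomial-time algorithm solving $\CSP(\fB_n)$ for every $\fB_n \in L_n$ — and we conclude that $\CSP(T)$ is polynomial-time tractable.

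I do not expect any real obstacle: the statement is a direct corollary and the argument is essentially bookkeeping. If one insists on complete rigour, the single delicate step is the one flagged above, namely checking that ``arc-consistency'' is genuinely one algorithm independent of $\fB_n$, so that the quantifier order in Proposition~\ref{prop:sampling} (``there is an algorithm that solves $\Csp(\fB_n)$ for every $\fB_n \in L_n$'', rather than merely ``for each $\fB_n$ there is an algorithm'') is respected.
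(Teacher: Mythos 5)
Your proof is correct and matches the paper's approach: the paper derives this corollary immediately from Theorem~\ref{thm:finiteStructureSolvedByAC} and Proposition~\ref{prop:sampling}, exactly as you do, with arc-consistency serving as the single uniform polynomial-time algorithm. Your extra remark on the quantifier order (one algorithm for all samples, with the $O(n^2m^2)$ bound giving uniformity) is precisely the bookkeeping the paper leaves implicit.
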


We can slightly relax the requirement that all the structures in $L_n$ have totally symmetric polymorphisms.

\begin{lemma}\label{thm:sampleHasHomToTotallySym}
Let $T$ be a theory with finite relational signature and let $(L_n)$ be a sampling of $T$ computable in polynomial time. If for all $n\in \N$ and all $\fB \in L_n$ there exists a model $\fA_{\fB}$ of $T$ and a homomorphism $h\colon \fB \rightarrow \fA_{\fB}$ such that $h(\fB)$ has totally symmetric polymorphisms of all arities, then $\CSP(T)$ is polynomial-time tractable.
\end{lemma}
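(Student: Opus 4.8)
The plan is to show that the obvious algorithm works: given an instance $\phi$ with $n$ variables, compute $L_n$, run the arc-consistency procedure on $(\fB,\phi)$ for every $\fB\in L_n$, and accept iff it succeeds for at least one $\fB$. The subtle point is that, although the samples $\fB$ themselves need not have totally symmetric polymorphisms, we never need to compute the homomorphic images $h(\fB)$; one might hope to replace each $\fB$ by $h(\fB)$ and invoke Corollary~\ref{thm:samplesWithTS}, but there is no reason $h(\fB)$ is computable from $\fB$. Their existence will be used only inside the correctness proof, so the algorithm itself only ever manipulates the structures supplied by the (polynomial-time computable) sampling.

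First I would recall the two standard facts about arc-consistency on a finite structure $\fB$: it is \emph{sound}, i.e.\ if $D(\phi)$ maps homomorphically to $\fB$ then arc-consistency on $(\fB,\phi)$ succeeds (restrict the homomorphism to obtain a non-empty arc-consistent family of lists); and, by Theorem~\ref{thm:finiteStructureSolvedByAC}, if $\fB$ has totally symmetric polymorphisms of all arities then it is also \emph{complete}, i.e.\ success implies $D(\phi)\to\fB$. The one extra ingredient I need is a push-forward lemma: if $h\colon\fB\to\fC$ is a homomorphism and arc-consistency on $(\fB,\phi)$ succeeds with surviving lists $(D_v)_v$, then arc-consistency on $(\fC,\phi)$ succeeds. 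This holds because $(h(D_v))_v$ is an arc-consistent family for $(\fC,\phi)$ — given a constraint $R(v_1,\dots,v_k)$ of $\phi$ and an element $h(b)\in h(D_{v_i})$, pick $t\in R^{\fB}$ witnessing arc-consistency of the value $b$ in $\fB$, and observe that $h(t)\in R^{\fC}$ witnesses it for $h(b)$ in $\fC$ — and $(h(D_v))_v$ is non-empty, so the maximal arc-consistent family computed by the procedure on $(\fC,\phi)$ is non-empty as well.

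With these in hand the correctness of the algorithm is immediate. For completeness: if $\phi$ is satisfiable in some model of $T$, then by the sampling property $D(\phi)\to\fB$ for some $\fB\in L_n$, hence arc-consistency on $(\fB,\phi)$ succeeds and the algorithm accepts. For soundness: if the algorithm accepts, arc-consistency on $(\fB,\phi)$ succeeds for some $\fB\in L_n$; by the push-forward lemma, arc-consistency on $(h(\fB),\phi)$ succeeds, and since $h(\fB)$ has totally symmetric polymorphisms of all arities, $D(\phi)\to h(\fB)$; as $h(\fB)$ is an induced substructure of the model $\fA_{\fB}$ of $T$, $\phi$ is satisfiable in $\fA_{\fB}$, hence in a model of $T$. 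Finally I would check the running time: $L_n$ is computable in time polynomial in $n$, so $\lvert L_n\rvert$ and each $\lvert B\rvert$ are polynomial in $n$; arc-consistency on $(\fB,\phi)$ runs in time polynomial in $\lvert\phi\rvert$ and $\lvert B\rvert$ by the bound cited after Theorem~\ref{thm:finiteStructureSolvedByAC}; summing over $\fB\in L_n$ keeps everything polynomial in the instance size.

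I expect the only genuinely non-routine step to be recognising that one should run arc-consistency on the given samples $\fB$ rather than on the inaccessible images $h(\fB)$, together with the push-forward lemma that legitimises transferring success from $\fB$ to $h(\fB)$; the soundness and completeness of arc-consistency, and the runtime accounting, are then bookkeeping.
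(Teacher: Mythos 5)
Your proposal is correct and follows essentially the same route as the paper: run arc-consistency directly on each sample $\fB\in L_n$, push the surviving non-empty lists forward along $h$ to get an arc-consistent family over $h(\fB)$, and invoke Theorem~\ref{thm:finiteStructureSolvedByAC} there to conclude satisfiability in $\fA_{\fB}$, with the same polynomial runtime accounting. Your explicit "push-forward lemma" is exactly the paper's step that applying $h$ to the lists preserves all relations that held in $\fB$, so there is no substantive difference.
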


\begin{proof}
To decide an instance with $n$ variables, we run arc-consistency on each pair $(\fB, \phi)$ where $\phi$ is the instance and $\fB \in L_n$. Whenever arc-consistency returns \enquote{unsatisfiable} for an instance on a structure $\fB$, the instance is unsatisfiable in $\fB$ (this holds for any finite structure). Hence, if all runs reject, the instance is unsatisfiable.

 Suppose that the arc-consistency procedure returned \enquote{satisfiable} on $(\fB, \phi)$. Then, the algorithm produced for each variable a non-empty list of values in $\fB$ such that no list can be shortened due to constraints in the instance. Thus, when we apply $h$ to all these values, we gain non-empty lists of values over $\fA_{\fB}$ where all relations that did hold in $\fB$ hold again. Hence, arc-consistency cannot return \enquote{unsatisfiable} when run on the image of $h$. 
 By Theorem~4.4, this means that the instance is satisfiable on the image of $h$, therefore in $\fA_{\fB}$ and therefore in some element of $(L_n)$, by definition of sampling. 
 As $(L_n)$ is computable in polynomial time and arc-consistency runs in uniform polynomial time, we can decide $\CSP(T)$ in polynomial time.
\end{proof}

We conclude this section with a concrete reasoning task that can be
solved in polynomial time with our methods,
and that is not covered by the combination results of Nelson and Oppen~\cite{NelsonOppen79, Oppen80_ComplexityConvexityAndCombinationsOfTheories} . 

\begin{example}\label{ex:rminPQ}
Given parts of a huge machine, each of which can be mounted by exactly one of two robots, and precedence constraints on the order in which parts can be mounted, calculate a possible order in which the machine can be assembled. Some parts must be mounted by one robot, some by the other robot, and some parts can be mounted by both robots. The two robots are not allowed to work simultaneously. This reasoning task can be modelled as $\CSP(T_1 \cup T_2)$ where 
\begin{itemize}
\item $T_1 := \Th(\fA_1)$ for a first-order expansion $\fA_1$ of $(\Q;<)$ 
that allows to model the precedence constraints, 
and 
\item $T_2 := \Th(\fA_2)$ where $\fA_2$ has two disjoint countably infinite unary relations and their union as domain. 
\end{itemize}
By Lemma~\ref{thm:QhasSampling} and
Lemma~\ref{thm:baseStrHaveSampling},
the theories $T_1$ and $T_2$ have polynomial-time computable samplings, and both structures have no pp-algebraicity. 
Hence, by Theorem~\ref{thm:Both_NoAlg+InjSamp}
 there exists a polynomial-time computable sampling $(L_n)$ for $T_1 \cup T_2$. 
 Let $\tau_i$ be the signature of $\fA_i$, for 
 $i \in \{1,2\}$. 
 For every $n \in {\mathbb N}$ and every $\fC_n \in L_n$ there exists an injective homomorphism $h_1$  from 
 $\fC_n^{\tau_1}$ to $\fA_1$ and an injective homomorphism $h_2$ from 
 $\fC_n^{\tau_2}$ to $\fA_2$; this follows by inspection of the construction of $\fC_n$ in the proof of Theorem~\ref{thm:Both_NoAlg+InjSamp}
 and the proofs of Lemma~\ref{thm:QhasSampling} and
Lemma~\ref{thm:baseStrHaveSampling}. 
As $\fA_1$ and $\fA_2$ are both countably infinite and $h_1$ and $h_2$ are injective, there exists a bijection $f$ between $A_1$ and $A_2$ such that $f(h_1(a)) = h_2(a)$ for all $a \in C_n$, inducing a fusion $\fA$ of $\fA_1$ and $\fA_2$. 
By construction of $\fA$, the map $h_1$ is a homomorphism from $\fC_n$ to $\fA$. 

Also note that the minimum operation is a polymorphism of all unary relations. Hence, 
 if $\fA_1$ has the minimum operation
as a polymorphism then the minimum operation is also a polymorphism of $\fA$. 
Then Lemma~\ref{thm:sampleHasHomToTotallySym}
 implies
 that $\Csp(T_1 \cup T_2)$ is polynomial-time tractable. 

An example for such a structure $\fA_1$ is $\fA_1 \ceq (\Q; <, R)$ where $R$ is the ternary relation defined by $R(x,y,z) \Leftrightarrow x=\min(y,z)$;
it is easy to check that the minimum operation is a polymorphism of $\fA_1$. 
This example is not covered by the conditions of Nelson and Oppen, since 
$\Th(\fA_1)$ is \emph{not convex}:
the formula $R(x,y,z) \wedge x \neq y$ is satisfiable,
the formula $R(x,y,z) \wedge y \neq z$ is satisfiable, but the formula $R(x,y,z) \wedge x \neq y \wedge y \neq z$ is not satisfiable in $\fA_1$. 
\end{example}

We mention that the polynomial-time tractability of 
$\Csp(\fA_1)$ for $\fA_1 = ({\mathbb Q};<,R)$ in our example was already known in the SMT community~\cite{Max-atoms}. 
It is straightforward to generalise the example to any finite number $m$ of robots, by replacing the structure $\fA_2$ by 
the structure $({\mathbb Q};P_1,\dots,P_m)$ that we have already encountered in Lemma~\ref{thm:baseStrHaveSampling}.

\printbibliography

\end{document}